\documentclass[11pt]{amsart}
\usepackage{amsmath}
\usepackage{amsfonts}
\usepackage{amssymb}
\newtheorem{thm}{Theorem}[section]

\newtheorem{cor}[thm]{Corollary}

\newtheorem{remark}[thm]{Remark}
\newtheorem{lemma}[thm]{Lemma}
\newtheorem{prop}[thm]{Proposition}

\newtheorem{exam}[thm]{Example}
\newtheorem{defn}[thm]{Definition}

\newcommand{\bb}[1]{\mathbb{#1}}
\newcommand{\cl}[1]{\mathcal{#1}}

\newcommand{\inner}[2]{\left\langle {#1}, {#2} \right\rangle}

\begin{document}

\title[Quantum Graph Homomorphisms via Operator Systems]{Quantum Graph Homomorphisms via Operator Systems}

\author[C.~M.~Ortiz]{Carlos M. Ortiz}
\address{Pacific Northwest National Laboratory,
Richland, WA 99352, U.S.A.}
\email{carlos.ortiz@pnnl.gov}
\author[V.~I.~Paulsen]{Vern I.~Paulsen}
\address{IQC and Department of Pure Mathematics, University of Waterloo,
Waterloo, Ontario, N2L 3G1, Canada}
\email{vpaulsen@uwaterloo.ca}

\date{\today}
\keywords{graph homomorphisms; operator systems; non-locality; entangled games}
\subjclass[2000]{Primary 46L15; Secondary 47L25}

\begin{abstract} We explore the concept of a graph homomorphism through the lens of C$^*$-algebras and operator systems. We start by studying the various notions of a quantum graph homomorphism and examine how they are related to each other. We then define and study a C$^*$-algebra that encodes all the information about these homomorphisms and establish a connection between computational complexity and the representation of these algebras. We use this C$^*$-algebra to define a new quantum chromatic number and establish some basic properties of this number. We then suggest a way of studying these quantum graph homomorphisms using certain completely positive maps and describe their structure. Finally, we use these completely positive maps to define the notion of a ``quantum" core of a graph.   
\end{abstract}

\maketitle


\section{Introduction}
Let $G=(V(G),E(G))$ and $H= (V(H), E(H))$ be graphs on vertices $V(G)=
\{ 1,...,n \}$ and $V(H)= \{ 1,...,m \}$. The theory of  graph
homomorphisms is one of the central tools of graph theory and is used
in the development of the concept of the core of a graph. More
recently, work in quantum information theory has led to quantum
versions of many concepts in graph theory and there is an extensive
literature (\cite{CMRSSW}, \cite{dsw}, \cite{PSSTW}). In particular,
D. Roberson\cite{Ro} and L. Mancinska \cite{MR}
developed an extensive theory of quantum homomorphisms of graphs. D. Stahlke\cite{St} interpreted graph
homomorphisms in terms of ``completely positive(CP) maps on the traceless operator space of a graph''. 

These papers motivate us to consider quantum and classical graph
homomorphisms as special families of completely positive maps between
the operator systems of the graphs.

There is not just a
single quantum theory of graphs, but there are really possibly several
different quantum theories depending on the validity of certain
conjectures of Connes and Tsirelson. In earlier work on quantum
chromatic numbers\cite{PT, PSSTW}, we studied the differences and similarities between the properties of the
quantum chromatic numbers defined by the possibly different quantum theories.
We wish to parallel those ideas for quantum graph homomorphisms.
One technique of \cite{PSSTW} and \cite{DP} was to show that the existence of
quantum colorings was equivalent to the existence of certain types of
traces on a C*-algebra affiliated with the graph and we wish to expand
upon that topic here. This leads us to introduce the C*-algebra of a
graph homomorphism and we will show that the existence or
non-existence of various types of quantum graph homomorphisms are
related to properties of this C*-algebra, e.g., whether or not it has
any finite dimensional representations or has any traces.

  Finally, we wish to use our correspondence between quantum graph
  homomorphisms and CP maps to introduce a quantum analogue of the
  core of a graph.

\section{The Homomorphism Game}

Given graphs $G$ and $H$ a {\bf graph homomorphism} from $G$ to $H$ is
a mapping $f: V(G) \to V(H)$ such that 
\[(v,w) \in E(G) \implies (f(v), f(w)) \in E(H).\]
When a graph homomorphism from $G$ to $H$ exists we write $G \to
  H$.

Paralleling the work on quantum chromatic numbers \cite{PT}, we study a graph
homomorphism game, played by Alice, Bob, and a referee.  Given graphs
$G$ and $H$, the referee gives Alice and Bob a vertex of $G$, say $v$
and $w$, respectively, and they each respond with a vertex from $H$,
say $x$ and $y$, respectively.  Alice and Bob win provided that:
\[ v=w \implies x=y,\]
\[v \sim_G w \implies x \sim_H y.\]

If they have some random strategy and we let $p(x,y|v,w)$ denote the probability that we get outcomes $x$ and $y$ given inputs $v$ and $w$, then these equations translate as:
\begin{enumerate}
\item $p(x \ne y | v=w)=0$
\item $p(x\nsim_H y | v \sim_G w) =0 $
\end{enumerate}

Now say $G$ has $n$ vertices and $H$ has $m$ vertices. We consider the sets of correlations studied in \cite{PSSTW} and \cite{PT}:
\[ Q_l(n,m) \subseteq Q_q(n,m) \subseteq Q_{qa}(n,m) \subseteq Q_{qc}(n,m)\subseteq Q_{vect}(n,m).\]

In the appendix, we review the definition and some known facts about these sets.

For $t \in \{ l,q,qa,qc,vect \}$ we define:
\[ G \stackrel{t}{\longrightarrow} H,\]
provided that there exists
\[ p(x,y|v,w) \in Q_t(n,m) \]
satisfying (1) and (2). Notice that when we write $p(x,y|v,w) \in Q_t(n,m)$ we really mean $\big( p(x,y|v,w) \big)_{v,w,x,y} \in Q_t(n,m)$. Any $p(x,y|v,w) \in Q_t(n,m)$ satisfying these
conditions we call a {\bf winning t-strategy} and say that there
exists a {\bf quantum t-homomorphism} from $G$ to $H.$ 

The condition (1) is easily seen to be the {\bf synchronous
  condition} defined in \cite{PSSTW} and the subset of correlations
satisfying this condition was denoted $Q^s_t(n,m).$ Thus,
$p(x,y|v,w)$ is a winning t-strategy if and only if
$p(x,y|v,w) \in Q^s_t(n,m)$ and satisfies (2).
 
The following result is known, but we provide a
proof since we are using a slightly different (but equivalent)
characterization of $Q_l(n,m).$

\begin{thm}\label{c=l} Let $G$ and $H$ be graphs. Then $G \to H$ if and only if $G \stackrel{l}{\longrightarrow} H.$
\end{thm}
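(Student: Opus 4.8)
The plan is to prove the two directions of this equivalence separately, with the classical-to-quantum direction being essentially a matter of exhibiting the right deterministic correlation, and the converse requiring that we extract an actual homomorphism from a local (classical) winning strategy.

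First I would handle the forward direction, $G \to H \implies G \stackrel{l}{\longrightarrow} H$. Suppose $f : V(G) \to V(H)$ is a graph homomorphism. I would define a deterministic correlation by setting $p(x,y \mid v,w) = 1$ exactly when $x = f(v)$ and $y = f(w)$, and $0$ otherwise. This is a product of point masses, hence trivially lies in $Q_l(n,m)$ (it is a deterministic local strategy, the extreme point case where Alice and Bob share no randomness and simply apply the function $f$). Then I would verify conditions (1) and (2): for (1), if $v = w$ then the only outcome with positive probability has $x = f(v) = f(w) = y$, so $p(x \ne y \mid v = w) = 0$; for (2), if $v \sim_G w$ then the only positive-probability outcome is $x = f(v)$, $y = f(w)$, and since $f$ is a homomorphism $f(v) \sim_H f(w)$, so $p(x \nsim_H y \mid v \sim_G w) = 0$. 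Thus $p$ is a winning $l$-strategy.

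For the converse, $G \stackrel{l}{\longrightarrow} H \implies G \to H$, I would start from a winning local strategy $p(x,y \mid v,w) \in Q_l(n,m)$ satisfying (1) and (2). By the definition of $Q_l$ reviewed in the appendix, such a correlation is a convex combination of deterministic local correlations, i.e.\ $p(x,y \mid v,w) = \sum_\lambda \pi_\lambda \, a_\lambda(x \mid v)\, b_\lambda(y \mid w)$ where each $a_\lambda(\cdot \mid v)$ and $b_\lambda(\cdot \mid w)$ is a $\{0,1\}$-valued (deterministic) conditional distribution and $\pi_\lambda$ is a probability weight. The key observation is that conditions (1) and (2) are stated as \emph{exact} zeros, so each must hold for every $\lambda$ in the support of $\pi$ individually: if the full average vanishes and all summands are nonnegative, then each summand with $\pi_\lambda > 0$ must itself satisfy the constraint. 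I would fix one such $\lambda$ and let $f_A, f_B : V(G) \to V(H)$ be the functions determined by the deterministic responses $a_\lambda, b_\lambda$. Condition (1) applied with $v = w$ forces $f_A(v) = f_B(v)$ for all $v$, so $f_A = f_B =: f$, and condition (2) then says precisely that $v \sim_G w \implies f(v) \sim_H f(w)$, i.e.\ $f$ is a graph homomorphism.

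The main obstacle is the careful justification that the exact-zero winning conditions descend to the individual deterministic components of the local decomposition; this is where the ``slightly different but equivalent characterization of $Q_l(n,m)$'' flagged in the statement matters, and I would want to be explicit about which characterization (convex hull of deterministic local strategies versus shared-randomness with independent responses) is in force, and to confirm that a single extreme point in the support already yields the homomorphism rather than needing the whole mixture. The rest is routine. I would also remark that the forward direction in fact shows a deterministic strategy suffices, so no genuine randomness is needed on the classical side.
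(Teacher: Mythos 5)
Your proof is correct, and your forward direction coincides with the paper's (a one-point probability space carrying the deterministic strategy $x=f(v)$, $y=f(w)$). Your converse, however, takes a genuinely different route. The paper works directly from the appendix's definition of $Q_l(n,m)$ via a probability space $(\Omega,P)$ and random variables $f_v,g_w$: condition (1) forces each event $B_v=\{\omega: f_v(\omega)=g_v(\omega)\}$ to have probability $1$, condition (2) forces each $Q_{v,w}=\{\omega: (f_v(\omega),g_w(\omega))\in E(H)\}$ for $(v,w)\in E(G)$ to have probability $1$, and since $V(G)$ and $E(G)$ are finite, the intersection $D$ of all these sets has measure $1$ and is nonempty; any $\omega\in D$ then defines the homomorphism $f(v):=f_v(\omega)$. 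You instead invoke the characterization of the local set as a convex combination of deterministic strategies and descend the exact-zero winning constraints to a single component in the support. This is valid, but note that the decomposition you cite is not the paper's literal definition --- this is precisely the ``equivalent characterization'' issue you flag yourself. From the appendix definition it follows by partitioning $\Omega$ into the finitely many measurable sets on which $\omega$ induces a fixed pair of functions $(F,F')$ with $F(v)=f_v(\omega)$ and $F'(w)=g_w(\omega)$ (finitely many since $V(G)$ and $V(H)$ are finite); once this is spelled out, your chosen $\lambda$ in the support corresponds exactly to the paper's chosen $\omega\in D$, so the two arguments are two descriptions of the same derandomization. The paper's route buys brevity --- no polytope structure is needed, only that a finite intersection of full-measure sets is nonempty --- while yours makes explicit the structural fact that exact-zero conditions are inherited by every deterministic strategy in the support of a perfect local strategy, which is a reusable observation (it is the same mechanism behind the synchronous-condition arguments elsewhere in the paper).
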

\begin{proof} First assume that $G \to H.$ Let $f: V(G) \to V(H)$ be a graph homomorphism. Let $\Omega = \{ t\}$ be the singleton probability space.  For each $v \in V(G)$ let Alice have the ``random variable'', $f_v(t) = f(v)$ and for each $w \in V(G)$ let Bob have the random variable $g_w(t) = f(w).$  Then 
\[p(x,y|v,w) := Prob(x=f_v(t), y= g_w(t)) = \begin{cases} 1, & \text{ when } x=f(v), y=f(w) \\ 0, & \text{else} \end{cases}.\]
From this it easily follows that $p(x,y|v,w)$ satisfies (1) and (2).

Conversely, assume that we have a probability space $(\Omega, P)$ and random variables $f_v, g_w: \Omega \to V(H)= \{1,...,m \}$ so that
$p(x,y|v,w) = P(x= f_v(\omega), y = g_w(\omega))$ satisfies (1) and (2).
By (1), for each $v$ the set $B_v = \{ \omega: f_v(\omega) = g_v(\omega) \}$ has probability 1.  Similarly, for each $(v,w) \in E(G)$ the set
$Q_{v,w}= \{ \omega: (f_v(\omega), g_w(\omega)) \in E(H) \}$ has probability 1. 
Thus,  
\[D = \big(\cap_{v \in V(G)} B_v \big) \cap \big(\cap_{(v,w) \in E(G)} Q_{v,w} \big)\] 
has measure 1, and so in particular is non-empty.
Fix any $\omega \in D$ and define $f: V(G) \to V(H)$ by  $f(v) := f_v(\omega)=g_v(\omega).$  Then whenever $(v,w) \in E(G)$ we have that $(f(v), f(w)) =(f_v(\omega), g_w(\omega)) \in E(H).$ Thus, $f$ is a graph homomorphism.
\end{proof}

Thus, quantum l-homomorphisms correspond to classical graph homomorphisms.

\begin{remark}\label{vect=V} In \cite{CMRSSW} several notions of graph
  homomorphisms were also introduced, including $G
  \stackrel{B}{\longrightarrow} H,$ $G \stackrel{V}{\longrightarrow}
  H$ and $G \stackrel{+}{\longrightarrow} H.$ A look at their
  definition shows that
$$G \stackrel{vect}{\longrightarrow} H \text{ if and only if } G \stackrel{V}{\longrightarrow} H$$
\end{remark}

\begin{cor}\label{arrows} Let $G$ and $H$ be graphs. Then
$$G \longrightarrow H \implies G\stackrel{q}{\longrightarrow} H \implies G\stackrel{qa}{\longrightarrow} H \implies G\stackrel{qc}{\longrightarrow} H \implies G\stackrel{vect}{\longrightarrow} H$$
\end{cor}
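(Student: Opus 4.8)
The plan is to observe that the defining conditions (1) and (2) are intrinsic properties of a single correlation tensor $\big(p(x,y|v,w)\big)_{v,w,x,y}$ and make no reference to the parameter $t$; the parameter $t$ governs only which correlation set $Q_t(n,m)$ the tensor is required to inhabit. Consequently, once a correlation $p$ witnessing a $t$-homomorphism has been produced, the \emph{same} $p$ automatically witnesses a $t'$-homomorphism whenever $Q_t(n,m) \subseteq Q_{t'}(n,m)$, since $p$ already satisfies (1) and (2) and now also lies in the larger set. Thus the entire corollary reduces to the inclusion chain
\[ Q_l(n,m) \subseteq Q_q(n,m) \subseteq Q_{qa}(n,m) \subseteq Q_{qc}(n,m) \subseteq Q_{vect}(n,m) \]
recorded above (and recalled in the appendix), together with Theorem \ref{c=l}.

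For the first implication I would argue as follows. Assume $G \to H$. By Theorem \ref{c=l} this is equivalent to $G \stackrel{l}{\longrightarrow} H$, so there is a correlation $p \in Q_l(n,m)$ satisfying (1) and (2). Since $Q_l(n,m) \subseteq Q_q(n,m)$, the same $p$ lies in $Q_q(n,m)$ and continues to satisfy (1) and (2); hence $G \stackrel{q}{\longrightarrow} H$. Each of the remaining three implications is handled identically, invoking the next inclusion in the chain: a winning $q$-strategy is a correlation in $Q_q(n,m) \subseteq Q_{qa}(n,m)$ satisfying (1) and (2), so it is also a winning $qa$-strategy, and likewise for the passages $qa \to qc$ and $qc \to vect$.

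I do not anticipate any genuine obstacle here: the corollary is a purely formal consequence of the nesting of the correlation sets combined with the $t$-independence of the winning conditions. The only point requiring a moment's care is to confirm that the inclusions $Q_l \subseteq Q_q \subseteq Q_{qa} \subseteq Q_{qc} \subseteq Q_{vect}$ hold as stated; these are the standard inclusions among the local, quantum, approximately-quantum, quantum-commuting, and vectorial correlation sets, which are reviewed in the appendix and may be cited directly rather than re-derived.
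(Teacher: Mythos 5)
Your proposal is correct and is exactly the paper's argument: the authors also prove the corollary as a direct consequence of Theorem \ref{c=l} and the containments $Q_l(n,m) \subseteq Q_q(n,m) \subseteq Q_{qa}(n,m) \subseteq Q_{qc}(n,m) \subseteq Q_{vect}(n,m)$, since the winning conditions (1) and (2) do not depend on $t$. You have merely written out in full the one-line reduction the paper leaves implicit.
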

\begin{proof}
This is a direct consequence of the above definitions, Theorem
\ref{c=l}, and the corresponding set containments. 
\end{proof}

\section{Quantum Homomorphisms and CP Maps}
Recall \cite{OP} that the operator system of a graph $G$ on $n$ vertices is the subspace of the $n \times n$ complex matrices $M_n$ given by
\[ \cl S_G = span \{ E_{v,w}: v=w \text{ or } (v,w) \in E(G) \},\]
where $E_{v,w}$ denotes the $n \times n$ matrix that is 1 in the $(v,w)$-entry and
0 elsewhere.

We now wish to use a winning x-strategy for the homomorphism game to
define a CP map from $\cl S_G$ to $\cl S_H.$  It will suffice to do this in
the case of winning vect-strategies since every other strategy is a
subset.

\begin{prop}\label{phipdefn} Let $p(x,y|v,w) \in Q^s_{vect}(n,m)$, let $E_{v,w} \in M_n$ and $E_{x,y} \in M_m$
denote the canonical matrix unit bases. Then the linear map
$\phi_p:M_n \to M_m$ defined on the basis by
\[\phi_p(E_{v,w}) = \sum_{x,y} p(x,y|v,w)E_{x,y},\]
is completely positive.
\end{prop}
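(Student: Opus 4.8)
The plan is to test the complete positivity of $\phi_p$ through Choi's theorem rather than directly. Recall that a linear map $\phi_p:M_n\to M_m$ is completely positive if and only if its Choi matrix
\[
C=\sum_{v,w}E_{v,w}\otimes\phi_p(E_{v,w})=\sum_{v,w,x,y}p(x,y|v,w)\,E_{v,w}\otimes E_{x,y}\in M_n\otimes M_m
\]
is positive semidefinite. Under the identification $M_n\otimes M_m\cong M_{nm}$ with rows and columns indexed by pairs, the entry of $C$ in row $(v,x)$ and column $(w,y)$ is exactly $p(x,y|v,w)$. Thus the whole proposition reduces to showing that the $nm\times nm$ matrix $\big[\,p(x,y|v,w)\,\big]_{(v,x),(w,y)}$ is positive semidefinite.

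To establish this I would feed in the structure of synchronous vector correlations recorded in the appendix. Because $p\in Q^s_{vect}(n,m)$, there is a \emph{single} family of vectors $\{e_{v,x}\}$ in a Hilbert space $\cl H$ for which
\[
p(x,y|v,w)=\inner{e_{v,x}}{e_{w,y}}.
\]
Granting this, $C$ is nothing but the Gram matrix of the family $\{e_{v,x}\}_{(v,x)}$, and Gram matrices are automatically positive semidefinite: for any scalars $\{c_{v,x}\}$ one has $\sum \overline{c_{v,x}}\,c_{w,y}\,\inner{e_{v,x}}{e_{w,y}}=\big\|\sum_{w,y}c_{w,y}\,e_{w,y}\big\|^2\ge 0$. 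Hence $C\ge 0$ and $\phi_p$ is completely positive.

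The step that requires care, and that I expect to be the crux, is the passage from the definition of a vector correlation to the single-family Gram representation above. A general vector correlation is described by two a priori distinct families $\{e_{v,x}\}$ (Alice) and $\{f_{w,y}\}$ (Bob) with $p(x,y|v,w)=\inner{e_{v,x}}{f_{w,y}}$, and for two distinct families the matrix $C$ is not a Gram matrix and need not be positive. It is precisely the synchronous condition (1) that allows the two families to be identified, in exact analogy with the fact that synchronous quantum correlations are implemented by a common collection of measurements together with a tracial state. I would therefore either cite this reduction directly from the known facts in the appendix or, if it is not stated in the form needed, extract it from the synchronicity relations $\inner{e_{v,x}}{f_{v,y}}=\delta_{x,y}\,p(x,x|v,v)$ together with the normalization of the two families, which force Bob's vectors to coincide with Alice's on the relevant subspace; once that identification is made, the Gram computation above finishes the argument.
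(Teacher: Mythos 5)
Your proof is correct and takes essentially the same route as the paper's: both reduce complete positivity to positive semidefiniteness of the Choi matrix and conclude by recognizing it as the Gram matrix of a single family of vectors $\{h_{v,x}\}$, the paper citing this single-family representation of synchronous vector correlations from \cite{PT}. Your closing sketch of how to extract that representation is also sound --- the synchronicity relations together with the nonnegativity condition (4) of the appendix force $\|\mathcal{X}_{v,x}\|^2 = \|\mathcal{Y}_{v,x}\|^2 = \langle \mathcal{X}_{v,x}, \mathcal{Y}_{v,x}\rangle$, so Alice's and Bob's vectors coincide exactly, not merely on a subspace.
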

\begin{proof}By Choi's theorem \cite{vpbook}, to prove that $\phi_p$ is CP it is
  enough to prove that the Choi matrix,
\[P:= \sum_{v,w} E_{v,w} \otimes \phi_p(E_{v,w})= \sum_{v,w,x,y}
p(x,y|v,w)E_{v,w} \otimes E_{x,y} \in M_n \otimes M_m =
M_{nm}\]
is positive semidefinite.

Recall that by the definition and characterization of vector correlations satisfying the
 synchronous condition in \cite{PT} there exists a Hilbert space and vectors
  $\{h_{v,x} \}$ satisfying:
\begin{itemize}
\item $h_{v,x} \perp h_{v,y}$ for all $x \ne y$,
\item $\sum_x h_{v,x} = \sum_x h_{w,x}$ for all $v,w$,
\item $\|\sum_x h_{v,x} \|=1,$
\end{itemize}
 such that $p(x,y|v,w) = \langle h_{v,x}, h_{w,y} \rangle.$

Now let $\{e_v\}$ and $\{ f_x \}$ denote the canonical orthonormal
bases for $\bb C^n$ and $\bb C^m$, respectively, let $a_{v,x} \in \bb
C$ be arbitrary complex numbers, so that $k = \sum_{v,x} a_{v,x} e_v
\otimes f_x$ is an arbitrary vector in $\bb C^n \otimes \bb C^m.$ We
have that
\[ \langle Pk, k \rangle = \sum_{v,w,x,y} \overline{a_{v,x}} a_{w,y}
p(x,y|v,w) = \sum_{v,w,x,y} \overline{a_{v,x}} a_{w,y} \langle
h_{v,x}, h_{w,y} \rangle = \langle h,h \rangle, \]
 where $h = \sum_{v,x} \overline{a_{v,x}}h_{v,x}.$

Thus, $P$ is positive semidefinite and $\phi_p$ is CP.
\end{proof}

\begin{thm}\label{TP} Let $G$ and $H$ be graphs, let $p(x,y|v,w) \in Q^s_{vect}(n,m)$ be a
winning vect-strategy for a quantum vect-homomorphism from $G$ to $H$ and let
$\phi_p:M_n \to M_m$ be the CP map defined in Proposition~\ref{phipdefn}.
Then $\phi_p(\cl S_G) \subseteq \cl S_H$ and $\phi_p$ is trace-preserving on $\cl S_G.$
\end{thm}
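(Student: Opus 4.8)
The plan is to check both assertions on the spanning set $\{E_{v,w} : v = w \text{ or } (v,w) \in E(G)\}$ of $\cl S_G$ and then extend by linearity; everything follows from the two winning conditions together with the normalization built into the vector model. Recall that being a winning vect-strategy means precisely that $p(x,y|v,v) = 0$ for $x \ne y$ (condition (1)) and that $p(x,y|v,w) = 0$ whenever $v \sim_G w$ and $x \nsim_H y$ (condition (2)).

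For the containment $\phi_p(\cl S_G) \subseteq \cl S_H$ I would argue by cases. If $v = w$, condition (1) kills every off-diagonal term, so $\phi_p(E_{v,v}) = \sum_x p(x,x|v,v)\,E_{x,x}$ is diagonal, and each diagonal matrix unit $E_{x,x}$ lies in $\cl S_H$. If instead $(v,w) \in E(G)$, so $v \sim_G w$, condition (2) kills every term with $x \nsim_H y$, leaving $\phi_p(E_{v,w}) = \sum_{(x,y) \in E(H)} p(x,y|v,w)\,E_{x,y}$, a combination of matrix units $E_{x,y}$ with $(x,y) \in E(H)$, all of which lie in $\cl S_H$. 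Hence $\phi_p$ maps the spanning set of $\cl S_G$ into $\cl S_H$.

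For trace preservation, the key identity is $\mathrm{tr}(\phi_p(E_{v,w})) = \sum_{x,y} p(x,y|v,w)\,\mathrm{tr}(E_{x,y}) = \sum_x p(x,x|v,w)$, so I must match this against $\mathrm{tr}(E_{v,w}) = \delta_{v,w}$. When $v = w$, I would first use the vector realization to record that the total weight is one: $\sum_{x,y} p(x,y|v,v) = \langle \sum_x h_{v,x}, \sum_y h_{v,y} \rangle = \| \sum_x h_{v,x} \|^2 = 1$, and then discard the off-diagonal terms via condition (1), obtaining $\sum_x p(x,x|v,v) = 1 = \mathrm{tr}(E_{v,v})$. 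When $(v,w) \in E(G)$ with $v \ne w$, the point is that $H$ is loopless, so $x \nsim_H x$ for every $x$; condition (2) then forces $p(x,x|v,w) = 0$ for all $x$, whence $\sum_x p(x,x|v,w) = 0 = \mathrm{tr}(E_{v,w})$. Extending by linearity gives $\mathrm{tr}(\phi_p(A)) = \mathrm{tr}(A)$ for all $A \in \cl S_G$.

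The computations themselves are routine; the two places that genuinely carry the argument are the normalization $\| \sum_x h_{v,x} \| = 1$ from the synchronous vector model, which is what pins the diagonal trace to $1$ rather than merely to some constant, and the loopless assumption on $H$, which is exactly what makes both the edge outputs land in $\cl S_H$ and the diagonal edge probabilities vanish. I expect no serious obstacle beyond being careful that condition (2) is applied in its pointwise form $p(x,y|v,w) = 0$ for each individual losing pair, rather than only as a statement about an aggregate probability.
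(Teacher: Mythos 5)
Your proof is correct and follows essentially the same route as the paper's: check both claims on the matrix units $E_{v,v}$ and $E_{v,w}$ with $(v,w)\in E(G)$, using the synchronous condition for the diagonal, condition (2) together with the looplessness of $H$ for the edges, and the normalization $\sum_x p(x,x|v,v)=1$ for trace preservation. The only difference is cosmetic: where the paper cites this normalization as part of ``the definition of $p$,'' you derive it explicitly from the vector realization via $\|\sum_x h_{v,x}\|=1$, which is a fine (if slightly more detailed) justification of the same fact.
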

\begin{proof}
To see that $\phi_p$ is trace preserving on $\cl S_G$ it will be enough to
show that $tr\big( \phi_p(E_{v,v}) \big) = tr(E_{v,v}) =1,$ and for $v
\sim_G w,$ $tr \big( \phi_p(E_{v,w}) \big) = tr(E_{v,w}) =0.$

When $v=w$ we have that
\[tr \big(\phi_p(E_{v,v})\big) = tr \big( \sum_{x,y} p(x,y|v,v)E_{x,y}
\big) = \sum_x
p(x,x|v,v) =1= tr(E_{v,v}),\] by the definition of $p$.

Finally, if $v \ne w$ and $E_{v,w} \in \cl S_G,$ then 
\[ tr \big( \phi_p(E_{v,w}) \big) = \sum_{x}p(x,x|v,w) =0=tr(E_{v,w}),\]
by (2) and the fact that $x \nsim_H x.$

Hence, $\phi_p$ is trace-preserving on $\cl S_G.$

Now we prove that $\phi(\cl S_G) \subseteq \cl S_H.$ First, $\phi_p(E_{v,v}) =
\sum_{x,y} p(x,y|v,v)E_{x,y},$ but since $p$ is synchronous,
$p(x,y|v,v) =0$ for $x \ne y.$ Hence, $\phi_p(E_{v,v})$ is a diagonal
matrix and so in $\cl S_H.$ To finish the proof it will be enough to show
that when $v \sim_G w,$ we have $\phi_p(E_{v,w}) \in \cl S_H.$ But by
property (2), $p(x,y|v,w) =0$ when $x \nsim_H y.$ Thus,
$\phi_p(E_{v,w}) \in \cl S_H.$ In fact, it is a matrix with 0-diagonal in $\cl S_H.$
\end{proof}

\begin{cor} Let $x \in \{ l,q,qa,qc, vect \}$. If $p(x,y|v,w) \in Q_x^s(n,m)$ is a
  winning x-strategy, then the map $\phi_p: M_n \to M_m$ is CP,
  $\phi_p(\cl S_G) \subseteq \cl S_H$ and $\phi_p$ is
  trace-preserving on $\cl S_G.$ We say that the correlation $p(x,y|v,w)$ {\bf implements} the quantum x-homomorphism.
\end{cor}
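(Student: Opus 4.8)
The plan is to observe that this corollary follows immediately from the chain of containments recorded in Section~2 together with Proposition~\ref{phipdefn} and Theorem~\ref{TP}. Those two results were deliberately proved at the level of the largest set $Q^s_{vect}(n,m)$, precisely so that the more restrictive cases demand no further argument.

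First I would recall the inclusions $Q_l(n,m) \subseteq Q_q(n,m) \subseteq Q_{qa}(n,m) \subseteq Q_{qc}(n,m) \subseteq Q_{vect}(n,m)$ and note that intersecting each set with the synchronous condition (1) preserves the containments, so that $Q^s_x(n,m) \subseteq Q^s_{vect}(n,m)$ for every $x \in \{l,q,qa,qc,vect\}$. Since conditions (1) and (2) are stated identically for every label $t$, a correlation that is a winning $x$-strategy is in particular an element of $Q^s_{vect}(n,m)$ satisfying (2), that is, a winning vect-strategy.

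Then I would simply invoke the earlier results on this $p$. Proposition~\ref{phipdefn} applies to any member of $Q^s_{vect}(n,m)$ and gives that $\phi_p$ is completely positive, while Theorem~\ref{TP} applies to any winning vect-strategy and gives both $\phi_p(\cl S_G) \subseteq \cl S_H$ and the fact that $\phi_p$ is trace-preserving on $\cl S_G$. Applying both to our correlation yields all three conclusions at once.

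I expect no real obstacle here; the only point requiring a moment's care is checking that the synchronous restriction does not disturb the containment chain, which is immediate since condition (1) is the same linear constraint regardless of $t$. The content of the corollary is thus organizational: it packages Proposition~\ref{phipdefn} and Theorem~\ref{TP} into one statement covering all five correlation types and fixes the terminology that such a $p$ \textbf{implements} the corresponding quantum $x$-homomorphism.
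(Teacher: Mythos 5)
Your proposal is correct and follows exactly the route the paper intends: the paper proves Proposition~\ref{phipdefn} and Theorem~\ref{TP} at the level of $Q^s_{vect}(n,m)$ precisely because, as it remarks before Proposition~\ref{phipdefn}, ``every other strategy is a subset,'' so the corollary is immediate from the containment chain. Your added check that the synchronous condition (1) respects the containments is the only point of care, and you handle it correctly.
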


\begin{exam}\label{classicisloc} Suppose we have a graph homomorphism $G \to H$ given by $f:V(G) \to V(H).$  If we let $\Omega=\{t\}$ be a one point probability space and define Alice and Bob's random variables $f_v, g_w: \Omega \to V(H)$ by $f_v(t) = f(v), g_w(t)= f(w)$ as in the proof of Theorem~\ref{c=l}, then we obtain $ p(x,y|v,w) \in Q^s_{l}(n,m)$ with
\[  p(x,y|v,w) = Prob( f_v=x, g_w=y) = \begin{cases} 1 & x=f(v), y= f(w) \\ 0 & \text{ else } \end{cases}.\]
The corresponding CP map satisfies
\[ \phi_p(E_{v,w}) = E_{f(v),f(w)}.\]

\end{exam}

We now wish to turn our attention to the composition of
quantum graph homomorphisms.
First we need a preliminary result.

\begin{prop} Let $x \in \{ l, q, qa, qc, vect \}$, let $p(x,y|v,w) \in
  Q_x(n,m)$ and let $q(a,b|x,y) \in
  Q_x(m,l)$. Then
\[ r(a,b|v,w) := \sum_{x,y} q(a,b|x,y)p(x,y|v,w) \in Q_x(n,l).\]
Moreover, if $p$ and $q$ are synchronous, then $r$ is synchronous.
\end{prop}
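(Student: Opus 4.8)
The plan is to realize the algebraic composition $r(a,b|v,w)=\sum_{x,y}q(a,b|x,y)p(x,y|v,w)$ as the \emph{operational} composition of the two strategies: Alice and Bob first run the $p$-strategy on inputs $v,w$ to produce intermediate outputs $x,y$, then run the $q$-strategy on inputs $x,y$ to produce the final outputs $a,b$. Concretely, I would tensor the two resource systems and build new measurements indexed by the initial inputs whose outcomes are the final outputs, contracting over the intermediate index. The same tensoring idea handles $q$, $qc$, and $vect$ uniformly; the cases $l$ and $qa$ need small separate arguments (independent shared randomness and a limiting argument). The synchronous claim is then a short direct computation.

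For $x=q$, suppose $p$ is given on $\mathcal H_A\otimes\mathcal H_B$ by a state $\psi$ and POVMs $\{E^v_x\}$, $\{F^w_y\}$, and $q$ on $\mathcal K_A\otimes\mathcal K_B$ by a state $\chi$ and POVMs $\{G^x_a\}$, $\{H^y_b\}$. On $(\mathcal H_A\otimes\mathcal K_A)\otimes(\mathcal H_B\otimes\mathcal K_B)$ with state $\psi\otimes\chi$ I set $\widetilde E^{\,v}_a=\sum_x E^v_x\otimes G^x_a$ and $\widetilde F^{\,w}_b=\sum_y F^w_y\otimes H^y_b$. Each $\widetilde E^{\,v}_a$ is positive, and $\sum_a\widetilde E^{\,v}_a=\sum_x E^v_x\otimes I=I$, so these are genuine POVMs; expanding the resulting correlation and factoring the inner product across the product state reproduces exactly $r(a,b|v,w)$. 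For $x=qc$ the construction is identical on a single space $\mathcal H\otimes\mathcal K$, and the only extra point is that $\widetilde E^{\,v}_a$ and $\widetilde F^{\,w}_b$ commute: this follows termwise since $[E^v_x,F^w_y]=0$ in $\mathcal H$ and $[G^x_a,H^y_b]=0$ in $\mathcal K$. For $x=vect$, using vector families $\{g_{v,x}\},\{h_{w,y}\}$ for $p$ and $\{g'_{x,a}\},\{h'_{y,b}\}$ for $q$, I set $G_{v,a}=\sum_x g_{v,x}\otimes g'_{x,a}$ and $H_{w,b}=\sum_y h_{w,y}\otimes h'_{y,b}$; orthogonality within a block factors through the two separate orthogonality relations, the common sum-vectors multiply to $\xi_p\otimes\xi_q$ (again independent of the input), the norm is $\|\xi_p\|\,\|\xi_q\|=1$, and $\langle G_{v,a},H_{w,b}\rangle=r(a,b|v,w)$.

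For $x=l$ I would write $p$ and $q$ in local hidden-variable form and use \emph{independent} shared randomness $(\lambda,\mu)$ for the two stages: composing the per-value response functions, $\sum_x D^A_\lambda(x|v)C^A_\mu(a|x)$ for Alice and the analogue for Bob, yields a local model for $r$. For $x=qa=\overline{Q_q}$, I would take $p=\lim p_n$ and $q=\lim q_n$ with $p_n,q_n\in Q_q$; since the composition map is bilinear, hence jointly continuous, on these finite-dimensional correlation vectors, the compositions $r_n\in Q_q$ converge entrywise to $r$, so $r\in\overline{Q_q}=Q_{qa}$. For the synchronous moreover, if $p,q$ are synchronous then $r(a,b|v,v)=\sum_x q(a,b|x,x)\,p(x,x|v,v)$ after dropping the $x\neq y$ terms, and this vanishes for $a\neq b$ because $q(a,b|x,x)=0$. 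I expect the main work to be bookkeeping rather than genuine difficulty: verifying that the tensor construction respects every structural constraint of each model—especially the commutation relations in the $qc$ case and the orthogonality/common-vector axioms in the $vect$ case—and confirming the inner-product factorization across the product state, which is what makes the intermediate index contract to give $r$ exactly.
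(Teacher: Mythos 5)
Your proposal is correct and takes essentially the same route as the paper where the paper gives details: the synchronous verification is the identical direct computation, and your vect construction $G_{v,a}=\sum_x g_{v,x}\otimes g'_{x,a}$, $H_{w,b}=\sum_y h_{w,y}\otimes h'_{y,b}$ is exactly the paper's $\alpha_{v,a}=\sum_x h_{v,x}\otimes f_{x,a}$, $\beta_{w,b}=\sum_y k_{w,y}\otimes g_{y,b}$. The only divergence is that for $x\in\{l,q,qa,qc\}$ the paper simply cites \cite[Lemma 6.5]{PSSTW}, whereas you supply the standard product-measure, tensor-POVM, termwise-commutation, and limiting arguments directly (and your one elision in the vect case, the nonnegativity requirement $\langle G_{v,a},H_{w,b}\rangle\ge 0$, is immediate since the factorization exhibits it as a sum of products of nonnegative numbers).
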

\begin{proof} First we show the synchronous condition is met by $r$.
  Suppose that $v=w$ and $a \ne b.$  Since $p$ is synchronous, all the
  terms $p(x,y|v,v)$ vanish unless $x=y.$  Thus,  $r(a,b|v,v) = \sum_x
  q(a,b|x,x)p(x,x|v,v).$ But because $q$ is synchronous, each
  $q(a,b|x,x) =0.$  Hence, if $a \ne b,$ then $r(a,b|v,v) =0.$

The cases when $x=l, q, qa, qc$ are shown in \cite[Lemma 6.5]{PSSTW}

Finally we tackle the case when $x= vect.$ In this case, we are given Hilbert spaces $\cl H_1, \cl H_2$, unit vectors $\eta_1 \in \cl H_1,
\eta_2 \in \cl H_2$, and vectors $h_{v,x}, k_{w,y} \in
\cl H_1$, $f_{x,a}, g_{y,b} \in \cl H_2$ such that:
\begin{multline*}
h_{v,x} \perp h_{v,y}, k_{v,x} \perp k_{v,y}, \forall x \ne y, \,\,\,
f_{x,a} \perp f_{x,b}, g_{x,a} \perp g_{x,b}, \forall a \ne b, \\
\sum_x h_{v,x} = \sum_x k_{v,x} = \eta_1, \forall v, \,\,\, \sum_a
f_{x,a} = \sum_a g_{x,a} = \eta_2, \forall x
\end{multline*}
such that $p(x,y|v,w) = \langle h_{v,x}, k_{w,y} \rangle$ and
$q(a,b|x,y) = \langle f_{x,a}, g_{y,b} \rangle.$

We set $\alpha_{v,a} = \sum_x h_{v,x} \otimes f_{x,a}$ and
$\beta_{w,b} = \sum_y k_{w,y} \otimes g_{y,b}.$ Now one checks that
these vectors satisfy all the necessary conditions, e.g.,
$\alpha_{v,a} \perp \alpha_{v,b}, \, \forall a \ne b$ and $\sum_a
\alpha_{v,a} = \eta_1 \otimes \eta_2, \forall v,$ and that
\[ \langle \alpha_{v,a}, \beta_{w,b} \rangle = \sum_{x,y} \langle
h_{v,x}, k_{w,y} \rangle \langle f_{x,a}, g_{y,b} \rangle = r(a,b|x,y).\] 
\end{proof}

\begin{cor}\label{comp} Let $x \in \{ l,q,qa,qc, vect \}$, let $p(x,y|v,w) \in
  Q_x(n,m),$ $q(a,b|x,y) \in Q_x(m,l)$ and let $r(a,b|v,w) =
  \sum_{x,y} q(a,b|x,y)p(x,y|v,w) \in Q_x(n,l).$ If $\phi_p:M_n \to
  M_m$, $\phi_q:M_m \to M_l$ and $\phi_r:M_n \to M_l$ are the
  corresponding linear maps, then $\phi_r = \phi_q \circ \phi_p.$
\end{cor}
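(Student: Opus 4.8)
The plan is to verify that the two linear maps $\phi_r$ and $\phi_q \circ \phi_p$ agree on a basis of $M_n$, namely the matrix units $\{E_{v,w}\}$. All three maps are defined as linear maps out of matrix algebras, and a linear map is determined by its values on a basis, so it suffices to check equality on each basis element $E_{v,w}$ and then invoke linearity.

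First I would recall the defining formulas coming from Proposition~\ref{phipdefn}: we have $\phi_p(E_{v,w}) = \sum_{x,y} p(x,y|v,w) E_{x,y}$ with the output matrix units in $M_m$, and analogously $\phi_q(E_{x,y}) = \sum_{a,b} q(a,b|x,y) E_{a,b}$ with outputs in $M_l$, and $\phi_r(E_{v,w}) = \sum_{a,b} r(a,b|v,w) E_{a,b}$. The computation then proceeds by applying $\phi_q$ to $\phi_p(E_{v,w})$ and using the linearity of $\phi_q$ to pull it inside the finite sum over $x,y$, producing a double sum over the intermediate indices.

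The key step is to read off the coefficient of $E_{a,b}$ in $(\phi_q \circ \phi_p)(E_{v,w})$. After interchanging $\phi_q$ with the sum and substituting its definition, that coefficient is $\sum_{x,y} q(a,b|x,y) p(x,y|v,w)$, which is precisely $r(a,b|v,w)$ by the definition of the composed correlation in the statement of Corollary~\ref{comp}. Hence the coefficients of $E_{a,b}$ agree for every $a,b$, giving $(\phi_q \circ \phi_p)(E_{v,w}) = \phi_r(E_{v,w})$ for each matrix unit, and therefore $\phi_r = \phi_q \circ \phi_p$ by linearity.

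There is no substantial obstacle here; the argument is a direct index-chasing calculation. The only points requiring care are keeping track of the three distinct matrix-unit systems living in $M_n$, $M_m$, and $M_l$, and making sure that the linearity of $\phi_q$ is what licenses the interchange of $\phi_q$ with the sum. Once both maps are seen to agree on the basis $\{E_{v,w}\}$, they coincide on all of $M_n$.
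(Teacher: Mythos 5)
Your proposal is correct and is exactly the routine matrix-unit computation the paper intends: the corollary is stated without proof precisely because checking $(\phi_q \circ \phi_p)(E_{v,w}) = \sum_{a,b}\bigl(\sum_{x,y} q(a,b|x,y)p(x,y|v,w)\bigr)E_{a,b} = \phi_r(E_{v,w})$ on the basis and extending by linearity is immediate. Your attention to the three distinct matrix-unit systems and to linearity licensing the interchange of $\phi_q$ with the finite sum is appropriate, and nothing further is needed.
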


The following is now immediate:

\begin{thm}\label{compgraphhomo} Let $x \in \{ l,q,qa,qc,vect \}$, let $G, H$ and $K$ be
  graphs on $n,m$ and $l$ vertices, respectively, and assume that
$G \stackrel{x}{\rightarrow} H$, $H \stackrel{x}{\rightarrow} K$. If $p(x,y|v,w)
  \in Q_x(n,m)$ and $q(a,b|x,y) \in Q_x(m,l)$ are winning quantum
  x-strategies for homomorphisms from $G$ to $H$ and $H$ to $K$, respectively,  then $r(a,b|v,w)= \sum_{x,y} q(a,b|x,y)p(x,y|v,w) \in Q_x(n,l)$ is a winning
  x-strategy for a homomorphism from $G$ and $K$, so that $G \stackrel{x}{\rightarrow} K.$ In summary, 
  $$\text{if } G \stackrel{x}{\rightarrow} H \text{ and } H \stackrel{x}{\rightarrow} K \text{, then } G \stackrel{x}{\rightarrow} K.$$
\end{thm}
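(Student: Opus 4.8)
The plan is to take $r(a,b|v,w) = \sum_{x,y} q(a,b|x,y)p(x,y|v,w)$ as the candidate correlation for $G \stackrel{x}{\rightarrow} K$ and to verify that it is a winning x-strategy. The preceding Proposition already does the heavy lifting: it guarantees that $r \in Q_x(n,l)$ and, since $p$ and $q$ are synchronous (being winning strategies), that $r$ is synchronous as well. Hence condition (1) holds automatically, and the only thing left to establish is the edge condition (2) for the pair $(G,K)$.

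So the essential step is to show that whenever $v \sim_G w$ one has $r(a,b|v,w) = 0$ for every $a \nsim_K b$. I would argue this by a two-stage collapse of the defining sum. Fix $v \sim_G w$ and $a \nsim_K b$. Because $p$ is a winning strategy for $G \to H$, condition (2) applied to $p$ forces $p(x,y|v,w) = 0$ for every pair with $x \nsim_H y$; thus in the sum $r(a,b|v,w) = \sum_{x,y} q(a,b|x,y)p(x,y|v,w)$ only the terms with $x \sim_H y$ can survive. But for exactly those surviving terms, $q$ being a winning strategy for $H \to K$ together with $x \sim_H y$ forces, by condition (2) applied to $q$ and the hypothesis $a \nsim_K b$, that $q(a,b|x,y) = 0$. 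Every term of the sum therefore vanishes, yielding $r(a,b|v,w) = 0$.

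Combining these observations, $r$ lies in $Q_x(n,l)$, is synchronous, and satisfies (2) for $(G,K)$, so it is a winning x-strategy and $G \stackrel{x}{\rightarrow} K$. There is no substantive obstacle here: all the genuinely nontrivial content, namely closure of $Q_x$ under the composition formula and preservation of synchronicity, is front-loaded into the Proposition, and the edge condition follows from a short logical chaining of the two individual edge conditions. The only point requiring a little care is keeping the roles of the intermediate indices $x,y$ straight, namely that they are simultaneously the outputs of the first game and the inputs of the second, so that it is precisely the adjacency relation $\sim_H$ on those indices which links the condition on $p$ to the condition on $q$.
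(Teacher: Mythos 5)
Your proposal is correct and follows exactly the route the paper intends: the paper states this theorem with no written proof at all (``The following is now immediate''), relying on the preceding Proposition for membership of $r$ in $Q_x(n,l)$ and for synchronicity, just as you do. Your two-stage verification of the edge condition --- each term $q(a,b|x,y)p(x,y|v,w)$ vanishes because either $x \nsim_H y$ kills the $p$-factor or $x \sim_H y$ together with $a \nsim_K b$ kills the $q$-factor --- is precisely the detail the paper leaves implicit, and it is carried out correctly.
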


\section{C*-algebras and Graph Homomorphisms}

We wish to define a C*-algebra $\cl A(G,H)$
generated by the relations arising from a winning strategy for the
graph homomorphism game.

\begin{defn}\label{GHalg} Let $G$ and $H$ be graphs. A set of
  projections $\{ E_{v,x}: v \in V(G), \, x \in V(H) \}$ on a Hilbert
  space $\cl H$ satisfying the following relations:
\begin{enumerate}
\item for each $v \in V(G),$ $ \sum_x E_{v,x}= I_{\cl H},$
\item if $(v,w) \in E(G)$ and $(x,y) \notin E(H)$ then $E_{v,x} E_{w,y} =0,$
\end{enumerate}
is called a {\bf representation of the graph homomorphism game from
$G$ to $H$}. If no
set of projections on any Hilbert space exists satisfying these
relations, then we say that the {\bf graph homomorphism game from $G$
  to $H$ is not
  representable}.
\end{defn}

\begin{defn}
Let $G$ and $H$ be graphs. If a representation of the graph homomorphism game exists, then we let
$\cl A(G,H)$ denote the ``universal'' C*-algebra generated by such
sets of projections. If the graph homomorphism game from $G$ to $H$ is not representable, then we
say that {\bf $\cl A(G,H)$ does not exist}. We write $G \stackrel{C^*}{\longrightarrow} H$ if and only if $\cl
A(G,H)$ exists.
\end{defn}
By ``universal'' we mean that $\cl A(G,H)$ is a unital C*-algebra
generated by projections $\{ e_{v,x}: v \in V(G), x \in V(H) \}$
satisfying
\begin{enumerate}
\item for each $v \in V(G),$ $\sum_x e_{v,x} =1,$
\item if $(v,w) \in E(G)$ and $(x,y) \notin E(H),$ then
  $e_{v,x}e_{w,y} =0,$
\end{enumerate}
with
the property that for
any representation of the graph homomorphism game on a Hilbert space $\cl H$ by projections $\{ E_{v,x}
\}$
satisfying the above relations, there exists a *-homomorphism
$\pi: \cl A(G,H) \to B(\cl H)$ with $\pi(e_{v,x}) = E_{v,x}.$

Here is one result that relates to existence.  Let $E_m$ be the
``empty'' graph on $m$ vertices, i.e., the graph with no edges.

\begin{prop} Let $G$ be a graph with at least one edge, $(v,w) \in
  E(G)$. Then $\cl A(G,E_m)$ does not exist.
\end{prop}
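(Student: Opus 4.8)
The plan is to argue by contradiction, showing that the defining relations for $\cl A(G,E_m)$ are so restrictive that they force the identity operator to vanish, which is impossible on a nonzero Hilbert space. So I would begin by supposing that a representation of the graph homomorphism game from $G$ to $E_m$ does exist, say by projections $\{E_{v,x}\}$ on some Hilbert space $\cl H \ne \{0\}$ (so that $I_{\cl H} \ne 0$), and then fix the given edge $(v,w) \in E(G)$.

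The key observation is that the empty graph has no edges whatsoever: $E(E_m) = \emptyset$. Consequently the hypothesis $(x,y) \notin E(E_m)$ of relation (2) in Definition~\ref{GHalg} is satisfied by \emph{every} pair $(x,y)$ (whether $x=y$ or $x \ne y$). Since $(v,w)$ is an edge of $G$, relation (2) therefore applies to all such products at once and yields $E_{v,x} E_{w,y} = 0$ for every $x$ and every $y$.

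Next I would bring in relation (1), which supplies the two resolutions of the identity $\sum_x E_{v,x} = I_{\cl H}$ and $\sum_y E_{w,y} = I_{\cl H}$. Multiplying them and distributing the sum gives
\[ I_{\cl H} = \Big( \sum_x E_{v,x} \Big)\Big( \sum_y E_{w,y} \Big) = \sum_{x,y} E_{v,x} E_{w,y} = 0, \]
because every summand vanishes by the previous step. This forces $\cl H = \{0\}$, contradicting the assumption that the representation lives on a nonzero Hilbert space. Hence no representation exists, and by definition $\cl A(G,E_m)$ does not exist.

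I do not anticipate a serious obstacle here: the whole mechanism is that choosing $H$ to be the empty graph makes relation (2) maximally destructive, annihilating all products arising from a single edge of $G$. The only point worth flagging explicitly is the convention that a genuine representation is carried by a nonzero Hilbert space (equivalently, that the unit of the universal C*-algebra is nonzero); without this, the relations would be vacuously met on the zero space and the statement would be vacuous. This also matches the game-theoretic intuition, namely that an edge query of $G$ can never be answered with an edge of an edgeless $H$.
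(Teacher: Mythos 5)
Your proof is correct and is essentially the paper's own argument: you use that $E(E_m)=\emptyset$ to annihilate all products $E_{v,x}E_{w,y}$ across the edge $(v,w)$, then multiply the two resolutions of the identity to force $I_{\cl H}=0$, exactly as the paper does with the universal generators $e_{v,x}$ to obtain $0=1$. The only cosmetic difference is that you phrase it at the level of a concrete representation on a nonzero Hilbert space rather than in the universal C*-algebra.
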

\begin{proof} By definition we have that $e_{v,x}e_{w,y} =0$ for all
  $x,y.$ Thus,
\[0 = \sum_{x,y} e_{v,x}e_{w,y} = \big( \sum_x e_{v,x} \big) \big(
\sum_y e_{w,y} \big) = 1,\]
a contradiction.
\end{proof}

In \cite[Definition 2]{CMRSSW} another type of graph homomorphism was defined, denoted by $G \stackrel{B}{\to} H$. Briefly, if in our definition of $Q_{vect}(n,m)$ we had dropped the requirement that all the inner products be non-negative, then we would obtain a larger set of tuples and $G \stackrel{B}{\to} H$ if and only if there exists a $p(x,y|v,w)$ in this larger set satisfying the conditions (1) and (2) of a winning strategy for the graph homomorphism game.  Note that in this case, since these numbers need not be non-negative, we cannot interpret them as probabilities.

\begin{prop}\label{C*impliesB} If $G
  \stackrel{C^*}{\longrightarrow} H$ or $G \stackrel{vect}{\to} H$, then
  $G \stackrel{B}{\longrightarrow} H$, as defined in \cite{CMRSSW}.  
\end{prop}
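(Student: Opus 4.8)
The statement contains two implications, and the plan is to dispatch them separately. The implication $G\stackrel{vect}{\to}H\Rightarrow G\stackrel{B}{\to}H$ should be purely definitional: a winning vect-strategy is implemented by vectors $\{h_{v,x}\}$ with the orthogonality and normalization properties recalled just before Proposition~\ref{phipdefn}, whose inner products $p(x,y|v,w)=\langle h_{v,x},h_{w,y}\rangle$ are in particular non-negative and satisfy the winning conditions (1) and (2); discarding the non-negativity, the very same data is a winning $B$-strategy. So all the content lies in $G\stackrel{C^*}{\to}H\Rightarrow G\stackrel{B}{\to}H$.

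For that implication I would begin from the hypothesis that $\cl A(G,H)$ exists, which provides a representation of the homomorphism game: projections $\{E_{v,x}\}$ on a nonzero Hilbert space $\cl H$ satisfying relations (1) and (2) of Definition~\ref{GHalg}. The first step is the elementary observation that for each fixed $v$ the projections $\{E_{v,x}\}_x$ are mutually orthogonal. Indeed, conjugating $I-E_{v,x}=\sum_{y\ne x}E_{v,y}$ by $E_{v,x}$ exhibits $0$ as the sum of the positive operators $E_{v,x}E_{v,y}E_{v,x}$, so each vanishes, whence $E_{v,x}E_{v,y}=0$ for $x\ne y$.

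Next I would fix a unit vector $\xi\in\cl H$ and set $h_{v,x}:=E_{v,x}\xi$. The orthogonality just proved gives $h_{v,x}\perp h_{v,y}$ for $x\ne y$, while $\sum_x h_{v,x}=\big(\sum_x E_{v,x}\big)\xi=\xi$ is independent of $v$ and has norm one; thus $\{h_{v,x}\}$ satisfies all the structural requirements of the vector model. Putting $p(x,y|v,w):=\langle h_{v,x},h_{w,y}\rangle=\langle\xi,E_{v,x}E_{w,y}\xi\rangle$, the winning condition (1) is immediate from $h_{v,x}\perp h_{v,y}$, and the winning condition (2) follows at once from relation (2) of Definition~\ref{GHalg}: when $(v,w)\in E(G)$ and $(x,y)\notin E(H)$ we have $E_{v,x}E_{w,y}=0$, so $p(x,y|v,w)=0$. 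Since only the finitely many vectors $h_{v,x}$ are involved, one may pass to their span and take $\cl H$ finite dimensional if desired.

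The one delicate point---and the step I expect to be the main obstacle---is that the $B$-model of \cite{CMRSSW} is formulated with real inner products (only the sign restriction being dropped), whereas the $C^*$-representation lives on a complex Hilbert space, so $\langle h_{v,x},h_{w,y}\rangle$ need not be real. I would resolve this by passing to the realification of $\cl H$, equipped with the real inner product $\mathrm{Re}\langle\cdot,\cdot\rangle$: this operation preserves norms and orthogonality, so the structural conditions on $\{h_{v,x}\}$ are unaffected, and it replaces $p$ by $\mathrm{Re}\,p$. Because the entries forced to vanish by conditions (1) and (2) were already zero, $\mathrm{Re}\,p$ is again a winning strategy, now a genuine $B$-strategy, yielding $G\stackrel{B}{\to}H$.
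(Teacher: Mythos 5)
Your proof is correct and follows essentially the same route as the paper: the vect case is dispatched as definitional, and for the $C^*$ case the paper likewise takes a representation $\{E_{v,x}\}$, fixes a unit vector $h$, and sets $h^v_x = E_{v,x}h$ to witness $G \stackrel{B}{\to} H$. You merely supply details the paper leaves implicit (the mutual orthogonality of projections summing to the identity, and the realification remark, which is harmless and unneeded in the paper's framing of the $B$-model, where only non-negativity of the inner products is dropped).
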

\begin{proof} The vect case is obvious from the remarks above.  Let $\{ E_{v,x}: v \in V(G), x \in V(H) \}$ be a set of
  projections that yields a
  representation of the graph homomorphism game on a Hilbert space $\cl H$ and let $h
  \in \cl H$ be any unit vector.

If we set $h^v_x = E_{v,x}h,$ then set of vectors $\{ h^v_x\}$
satisfies all the properties of the definition of $G
\stackrel{B}{\longrightarrow} H$ in \cite[Definition 2]{CMRSSW}.
\end{proof}

\begin{remark} We do not know necessary and sufficient conditions for
  $\cl A(G,H)$ to exist. In particular, we do not know if $G
  \stackrel{B}{\to} H$ implies $G
  \stackrel{C^*}{\longrightarrow}H$.
\end{remark}

\begin{prop}
If $G \stackrel{C^*}{\to } H$ and $H\stackrel{C^*}{\to } K$, then $G\stackrel{C^*}{\to } K$.
\end{prop}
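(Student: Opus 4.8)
The plan is to mimic, at the level of projections, the composition of correlations used in Theorem~\ref{compgraphhomo}. The hypotheses $G \stackrel{C^*}{\to} H$ and $H \stackrel{C^*}{\to} K$ assert that $\cl A(G,H)$ and $\cl A(H,K)$ exist, which by definition means that both games are representable. So I can fix a representation of the game from $G$ to $H$, namely projections $\{E_{v,x}: v \in V(G),\, x \in V(H)\}$ on a Hilbert space $\cl H_1$ satisfying relations (1) and (2) of Definition~\ref{GHalg}, and a representation of the game from $H$ to $K$, namely projections $\{F_{x,a}: x \in V(H),\, a \in V(K)\}$ on $\cl H_2$ satisfying the corresponding relations for $(H,K)$. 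On $\cl H_1 \otimes \cl H_2$ I then define
\[ P_{v,a} := \sum_{x \in V(H)} E_{v,x} \otimes F_{x,a}, \qquad v \in V(G),\ a \in V(K),\]
and I claim that $\{P_{v,a}\}$ is a representation of the graph homomorphism game from $G$ to $K$. This immediately gives that $\cl A(G,K)$ exists, i.e. $G \stackrel{C^*}{\to} K$.

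The first thing to establish is that each $P_{v,a}$ is a projection. Self-adjointness is clear since the $E$'s and $F$'s are self-adjoint. For idempotency I need the key fact that, for fixed $v$, the projections $\{E_{v,x}\}_x$ are \emph{mutually orthogonal}: this follows from relation (1), $\sum_x E_{v,x} = I_{\cl H_1}$, because any finite family of projections summing to the identity (more generally, to a projection) is automatically pairwise orthogonal. Granting $E_{v,x}E_{v,x'} = \delta_{x,x'}E_{v,x}$, the cross terms in $P_{v,a}^2 = \sum_{x,x'} E_{v,x}E_{v,x'}\otimes F_{x,a}F_{x',a}$ collapse and $P_{v,a}^2 = \sum_x E_{v,x}\otimes F_{x,a} = P_{v,a}$.

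Next I verify relation (1) for the composite: using $\sum_a F_{x,a} = I_{\cl H_2}$ for each $x$ and then $\sum_x E_{v,x} = I_{\cl H_1}$ gives $\sum_a P_{v,a} = \sum_x E_{v,x}\otimes(\sum_a F_{x,a}) = (\sum_x E_{v,x})\otimes I_{\cl H_2} = I_{\cl H_1\otimes\cl H_2}$. Finally, for relation (2), suppose $(v,w) \in E(G)$ and $(a,b)\notin E(K)$; I expand $P_{v,a}P_{w,b} = \sum_{x,y} E_{v,x}E_{w,y}\otimes F_{x,a}F_{y,b}$ and argue that each summand vanishes by a dichotomy on the pair $(x,y)$: if $(x,y)\notin E(H)$ then $E_{v,x}E_{w,y}=0$ by relation (2) for $(G,H)$ (since $(v,w)\in E(G)$), while if $(x,y)\in E(H)$ then $F_{x,a}F_{y,b}=0$ by relation (2) for $(H,K)$ (since $(a,b)\notin E(K)$). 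Hence $P_{v,a}P_{w,b}=0$, and the composite is a valid representation. The only non-formal ingredient — and the step I would flag as the crux — is the mutual orthogonality of $\{E_{v,x}\}_x$, since without it $P_{v,a}$ fails to be idempotent; everything else is a direct computation parallel to the vect-case of the preceding composition proposition.
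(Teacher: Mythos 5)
Your proof is correct and follows essentially the same route as the paper: both form the tensor-product projections $\sum_x E_{v,x}\otimes F_{x,a}$ on $\cl H_1\otimes\cl H_2$ and verify relations (1) and (2) by the same computations, including the dichotomy on whether $(x,y)\in E(H)$. If anything, you are more careful than the paper at the one delicate step, correctly deriving the mutual orthogonality of $\{E_{v,x}\}_x$ from relation (1) (projections summing to the identity are pairwise orthogonal), where the paper loosely attributes it to (2).
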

\begin{proof}
Since $G \stackrel{C^*}{\to } H$ and $H\stackrel{C^*}{\to } K$, then
we know that there exist projections $\{ E_{v,x} \}$ and $\{F_{y,a}\}$
with $v\in V(G)$, $x,y\in V(H)$ and $a\in V(K)$ on Hilbert spaces $\cl
H$ and $\cl K$, respectively, satisfying $(1)$ and $(2)$. Consider the
set of self-adjoint operators on $\cl H \otimes \cl K$ defined by $G_{v,a}= \sum_{x\in V(H)} E_{v,x} \otimes F_{x,a} $ for $x\in V(G)$ and $a\in V(K)$. Notice that,
\begin{multline*}
G_{v,a}G_{v,a} =(\sum_{x} E_{v,x} \otimes F_{x,a})(\sum_{y} E_{v,y}
\otimes F_{y,a})= \\ \sum_{x,y} E_{v,x}E_{v,y} \otimes F_{x,a}F_{y,a}=
\sum_{x} E_{v,x} \otimes F_{x,a}= G_{v,a} 
\end{multline*}
by (2) and the fact that $E_{v,x}$ and $F_{x,a}$ are
projections. Thus, each $G_{v,a}$ is a projection. Furthermore, for each $v\in V(G)$,
$$\sum_a G_{v,a} = \sum_a \sum_x E_{v,x}\otimes F_{x,a}= \sum_x E_{v,x}\otimes (\sum_a F_{x,a}) = (\sum_x E_{v,x})\otimes I_{\cl K} = I_{\cl H}\otimes I_{\cl K} $$
by (1).
Moreover, for each $(v,w)\in E(G)$ and $(a,b)\not \in E(K)$,
$$G_{v,a}G_{w,b} = (\sum_x E_{v,x}\otimes F_{x,a})(\sum_{y} E_{w,y} \otimes F_{y,b})
=\sum_x \sum_y (E_{v,x}\otimes F_{x,a})(E_{w,y} \otimes F_{y,b})$$
$$= \sum_x \sum_y E_{v,x}E_{w,y}\otimes F_{x,a}F_{y,b} 
= \sum_{x\sim y} E_{v,x}E_{w,y}\otimes F_{x,a}F_{y,b} = 0$$
by (2). Hence, $\{ G_{v,a} : v \in V(G), a \in V(K) \}$ is a
representation of a graph homomorphism game from $G$ to $K$.
\end{proof}

Recall that a {\bf trace} on a unital C*-algebra $\cl B$ is any state $\tau$ such that  
$\tau(ab) = \tau(ba)$ for all $a,b \in \cl B.$ 


\begin{thm}\label{trhomo} Let $G$ be a graph and let $x \in \{ l, q,
  qa, qc, vect \}$.
\begin{enumerate}
\item $G \stackrel{qc}{\rightarrow} H$ if and only if there exists a
  tracial state on $\cl A(G,H),$
\item if $ G \stackrel{qc}{\rightarrow} H$, then $G \stackrel{C^*}{\rightarrow} H,$
\item $G \stackrel{q}{\rightarrow} H$ if and only if $\cl A(G,H)$ has a finite dimensional representation,
\item $G \rightarrow H$ if and only if $\cl A(G,H)$ has an abelian representation.
\end{enumerate}
\end{thm}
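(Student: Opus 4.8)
The plan is to exploit the correspondence from \cite{PSSTW, PT} between synchronous correlations and tracial data on the game C*-algebra, specialized to the homomorphism relations. The foundational fact I invoke is that a tuple $p$ lies in $Q^s_{qc}(n,m)$ iff there is a unital C*-algebra $\cl B$ with a faithful tracial state $\tau$ and projections $\{E_{v,x}\}$ with $\sum_x E_{v,x}=1$ such that $p(x,y|v,w)=\tau(E_{v,x}E_{w,y})$, and that the analogous statement for $Q^s_q(n,m)$ uses a finite-dimensional $\cl B$ with its faithful trace. The one recurring trick is that for projections $P,Q$ and a faithful trace $\tau$, $\tau(PQ)=0$ forces $PQ=0$: indeed $\tau(PQ)=\tau(PQP)=\tau((QP)^*(QP))$ with $PQP\ge 0$.

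For (1), suppose first $G\stackrel{qc}{\to}H$, with winning $qc$-strategy $p\in Q^s_{qc}(n,m)$ satisfying (1) and (2). Writing $p(x,y|v,w)=\tau(E_{v,x}E_{w,y})$ as above, the synchronous condition gives $E_{v,x}E_{v,y}=0$ for $x\ne y$, while condition (2) with the faithful-trace trick gives $E_{v,x}E_{w,y}=0$ whenever $(v,w)\in E(G)$ and $(x,y)\notin E(H)$. Hence $\{E_{v,x}\}$ is a representation of the homomorphism game, so $\cl A(G,H)$ exists and the universal property yields $\pi:\cl A(G,H)\to\cl B$ with $\pi(e_{v,x})=E_{v,x}$; then $\tau\circ\pi$ is a tracial state on $\cl A(G,H)$. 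Conversely, given a tracial state $\tau$ on $\cl A(G,H)$, set $p(x,y|v,w)=\tau(e_{v,x}e_{w,y})$: the cited correspondence makes $p$ a synchronous $qc$-correlation, and relations (1),(2) of $\cl A(G,H)$ force the two winning conditions directly. Statement (2) is then immediate, since the forward direction of (1) already produced a representation of the game, whence $\cl A(G,H)$ exists, i.e. $G\stackrel{C^*}{\to}H$.

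Part (3) is the finite-dimensional analogue of (1). If $G\stackrel{q}{\to}H$, represent the winning $q$-strategy through a finite-dimensional $\cl B$ with faithful trace; the same two computations yield finite-dimensional projections obeying the game relations, hence a finite-dimensional representation of $\cl A(G,H)$. Conversely, a finite-dimensional representation $\pi:\cl A(G,H)\to M_d$ gives projections $E_{v,x}=\pi(e_{v,x})$, and $p(x,y|v,w)=\tfrac1d\,\mathrm{Tr}(E_{v,x}E_{w,y})$ is a synchronous $q$-correlation whose winning conditions follow at once from $e_{v,x}e_{w,y}=0$; no faithfulness is needed here, as those relations already hold in $\cl A(G,H)$ and are preserved by $\pi$.

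For (4) I would use $G\to H\Leftrightarrow G\stackrel{l}{\to}H$ (Theorem~\ref{c=l}) together with Gelfand theory. Given a homomorphism $f$, the scalar projections $E_{v,x}=\delta_{x,f(v)}\in\bb C$ give a one-dimensional, hence abelian, representation: $\sum_x E_{v,x}=1$, and $E_{v,x}E_{w,y}\ne 0$ forces $x=f(v),\,y=f(w)$, a pair in $E(H)$ by the homomorphism property, so relation (2) holds. Conversely, an abelian representation has nonzero commutative unital image, so admits a character $\chi$; then $\chi(E_{v,x})\in\{0,1\}$ and $\sum_x\chi(E_{v,x})=1$ single out a unique $x=:f(v)$, and multiplicativity of $\chi$ against relation (2) shows $f$ is a homomorphism. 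The main obstacle throughout is not any one computation but correctly invoking the representation theorem for synchronous $qc$- and $q$-correlations and, above all, the faithfulness of the accompanying trace: faithfulness is precisely what upgrades the winning condition ``$p=0$'' to the operator relation ``$E_{v,x}E_{w,y}=0$'' that is needed to land inside $\cl A(G,H)$.
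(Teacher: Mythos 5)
Your proof is correct and follows essentially the same route as the paper's: the tracial characterization of synchronous $qc$- and $q$-correlations from \cite{PSSTW}, faithfulness of the trace in the GNS representation to upgrade $\tau(E_{v,x}E_{w,y})=0$ to the operator relation $E_{v,x}E_{w,y}=0$, and the universal property of $\cl A(G,H)$, with (4) handled via Theorem~\ref{c=l} and one-dimensional (abelian) representations. The only differences are that you make explicit some steps the paper leaves implicit, namely the Gelfand character argument for the converse of (4) and the normalized-trace construction $p(x,y|v,w)=\tfrac{1}{d}\,\mathrm{Tr}\bigl(E_{v,x}E_{w,y}\bigr)$ for the converse of (3).
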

\begin{proof} We have that $G \stackrel{qc}{\rightarrow} H$ if and
  only if there exists a winning $qc$-strategy $p(x,y|v,w) \in
  Q^s_{qc}(n,m)$. By \cite{PSSTW} this strategy must be given by a
  trace on the algebra generated by Alice's operators with $p(x,y|v,w)
  = \tau(A_{v,x}A_{w,y}).$  Moreover, in the GNS representation, this
  trace will be faithful.

We now wish to show that these operators satisfy the necessary
relations to induce a representation of $\cl A(G,H).$

By the original hypotheses, we will have that $A_{v,x}A_{v,y} =0$ for $x \ne y.$
When $(v,w) \in E(G)$ and $(x,y) \not \in E(H),$ we will have that
$\tau(A_{v,x}A_{w,y}) = p(x,y|v,w) =0$ and hence, $A_{v,x}A_{w,y}=0.$

Thus, Alice's operators give rise to a representation of $\cl A(G,H)$ and composing this *-homomorphism with the tracial state on the algebra generated by Alice's operators gives the trace on $\cl A(G,H).$ The converse follows by setting $p(x,y|v,w)=\tau(A_{v,x} A_{w,y}).$

Clearly, (2) follows from (1).

The proof of (3) is similar to the proof of (1). In this case since $p(x,y|v,w) \in
Q^s_q(n,m)$ the operators all live on a finite dimensional space and
hence generate a finite dimensional representation. 

The proof of (4) first uses the fact that $G \rightarrow H$ if and
only if $G \stackrel{l}{\rightarrow} H$ (\ref{c=l}). If we let $(\Omega, \lambda)$
be the corresponding probability space and let $f_v, g_w: \Omega \to
V(H)$ be the random variables for Alice and Bob, respectively, then
the conditions imply that $f_v = g_v$ a.e. If we let $E_{v,x}$ denote
the characteristic function of the set $f^{-1}(\{ x\}),$ then it is
easily checked that these projections in $L^{\infty}(\Omega, \lambda)$
satisfy all the conditions needed to give an abelian representation of
$\cl A(G,H).$ 
\end{proof}

Note that saying that $\cl A(G,H)$ has an abelian representation is
equivalent to requiring that it has a one-dimensional representation.

We now apply these results to coloring numbers.
Let $K_c$ denote the complete graph on $c$ vertices.

\begin{prop} Let $x \in \{ l, q, qa, qc, vect \}$,  then $\chi_x(G)$ is the least integer $c$ for which $G \stackrel{x}{\rightarrow} K_c.$
\end{prop}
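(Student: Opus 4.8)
The plan is to show that the quantum chromatic numbers $\chi_x(G)$ defined via colorings coincide with the least $c$ for which $G \stackrel{x}{\rightarrow} K_c$, by observing that a winning $x$-strategy for the homomorphism game into $K_c$ is precisely an $x$-coloring. First I would recall the definition of $\chi_x(G)$ from the coloring game studied in \cite{PT, PSSTW}: there, Alice and Bob are each given a vertex of $G$ and must respond with a color from $\{1,\dots,c\}$, winning provided the outputs agree on equal inputs and differ on adjacent inputs, with the correlation lying in $Q_x^s(n,c)$. Thus $\chi_x(G)$ is the least $c$ admitting such a correlation.

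The key step is to match these two sets of winning conditions under the identification $V(K_c) = \{1,\dots,c\}$. I would note that in the complete graph $K_c$ two distinct vertices are always adjacent, $x \sim_{K_c} y \iff x \ne y$, while no vertex is adjacent to itself. Hence condition (2) for the homomorphism game into $K_c$, namely $p(x \nsim_{K_c} y \mid v \sim_G w) = 0$, says exactly that $p(x = y \mid v \sim_G w) = 0$; that is, adjacent vertices of $G$ receive distinct colors. Combined with the synchronous condition (1), $p(x \ne y \mid v = w) = 0$, these are literally the two defining constraints of a winning $x$-coloring strategy. Therefore $p(x,y|v,w) \in Q_x^s(n,c)$ is a winning $x$-strategy for $G \stackrel{x}{\rightarrow} K_c$ if and only if it is a winning $x$-strategy for properly $c$-coloring $G$.

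With this equivalence in hand, the proposition follows immediately: $G \stackrel{x}{\rightarrow} K_c$ holds precisely when a winning $x$-coloring with $c$ colors exists, so the least such $c$ on the homomorphism side equals the least such $c$ on the coloring side, which is $\chi_x(G)$ by definition. I would write the argument as a short unwinding of definitions, taking care to state the adjacency characterization of $K_c$ explicitly since that is the only nontrivial translation.

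I do not expect a serious obstacle here; the content is a definitional identification. The one point requiring care is ensuring that the coloring game of \cite{PT, PSSTW} is set up with the same synchronous convention and the same correlation classes $Q_x^s$, so that the two winning conditions really are identical and not merely analogous; I would cite those references for the definition of $\chi_x(G)$ to make the matchup airtight.
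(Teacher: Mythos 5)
Your proof is correct and takes essentially the same route as the paper, which simply observes that a winning $x$-strategy for the homomorphism game into $K_c$ is precisely a winning $x$-coloring strategy. Your version is more explicit (spelling out the adjacency characterization $x \nsim_{K_c} y \iff x = y$ and checking both directions of the identification), but the content is the same definitional unwinding.
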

\begin{proof} Any winning $x$-strategy for a homomorphism from $G$ to $H$ is a winning strategy for a $x$-coloring.
\end{proof}

The above result motivates the following definition.

\begin{defn} Define $\chi_{C^*}(G)$ to be
  the least integer $c$ for which $G \stackrel{C^*}{\rightarrow} K_c.$ Similarly, define $\omega_{C^*}(G)$ to be the biggest integer $c$ for which $K_c \stackrel{C^*}{\rightarrow} G.$
\end{defn}

We let $\vartheta(G)$ denote the Lovasz theta function of a graph $G$
and we let $\overline{G}$ denote the graph with the same vertex set as
$G$ and edges defined by $(v,w) \in E(\overline{G}) \iff v \ne w
\text{ and } (v,w) \notin E(G)$.

\begin{prop} Let $G$ be a graph, then
$$\omega_{C^*}(G)\leq \vartheta(\overline{G})\leq \chi_{C^*}(G).$$
\end{prop}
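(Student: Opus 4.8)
The plan is to prove both inequalities as operator-algebraic incarnations of the two halves of the Lov\'asz sandwich theorem, using only the projections furnished by a representation of the homomorphism game (which is exactly what $\stackrel{C^*}{\longrightarrow}$ guarantees), and never needing a trace. I will invoke the two standard dual characterizations of the theta number: the \emph{primal} (maximization) form
\[ \vartheta(\overline G) = \max\{\langle J, X\rangle : X \succeq 0, \ \operatorname{tr}(X) = 1, \ X_{xy} = 0 \text{ whenever } x \ne y \text{ and } (x,y)\notin E(G)\}, \]
whose feasible points give lower bounds, and the \emph{dual} (minimization) form
\[ \vartheta(\overline G) = \min \ \max_v \ \frac{1}{|\langle c_0, H_v\rangle|^2}, \]
the minimum over unit vectors $c_0$ and orthonormal representations $\{H_v\}$ of $\overline G$ (unit vectors with $H_v \perp H_w$ whenever $v\ne w$ and $(v,w) \in E(G)$), whose feasible points give upper bounds. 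Since I only need one-sided bounds, it suffices in each case to exhibit a \emph{single} feasible point, so working in a possibly infinite-dimensional complex Hilbert space $\cl H$ causes no difficulty, provided I note that these formulations are valid over $\mathbb{C}$ in arbitrary dimension.

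For the left inequality, set $c = \omega_{C^*}(G)$ and fix a representation $\{E_{v,x} : v \in V(K_c), x \in V(G)\}$ of the homomorphism game $K_c \to G$ on $\cl H$, together with any unit vector $h \in \cl H$. I will form the self-adjoint operators $A_x = \sum_v E_{v,x}$ and define the $V(G)\times V(G)$ matrix $X_{xy} = \tfrac1c \langle A_x h, A_y h\rangle$. Then $X$ is a Gram matrix, hence positive semidefinite. The two game relations do the rest: relation (2) for $K_c \to G$ gives $E_{v,x}E_{w,x} = 0$ for $v \ne w$ (as $(v,w)\in E(K_c)$ and $(x,x)\notin E(G)$), which together with relation (1) yields $\sum_x A_x^2 = cI$ and hence $\operatorname{tr}(X) = 1$; the same relation applied to genuine non-edges $(x,y)\notin E(G)$ with $x\ne y$, combined with orthogonality of the PVM $\{E_{v,x}\}_x$, forces $X_{xy} = 0$, so $X$ meets the support constraint (equivalently, $X \in \cl S_G$). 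Finally relation (1) gives $\sum_x A_x = cI$, whence $\langle J, X\rangle = \tfrac1c\langle (\sum_x A_x)h, (\sum_y A_y)h\rangle = c$. Feeding $X$ into the primal form yields $\vartheta(\overline G) \ge c = \omega_{C^*}(G)$.

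For the right inequality, set $c = \chi_{C^*}(G)$ and fix a representation $\{E_{v,x}: v\in V(G), x \in V(K_c)\}$ of the game $G \to K_c$ on $\cl H$, a unit vector $h$, and the standard orthonormal basis $\{e_x\}$ of $\mathbb{C}^c$. I will build vectors in $\cl H \otimes \mathbb{C}^c$ by $H_v = \sum_x E_{v,x}h \otimes e_x$. Relation (1), $\sum_x E_{v,x} = I$, makes each $H_v$ a unit vector, and relation (2) for $G\to K_c$ (whose only non-edges are the diagonal) gives $E_{v,x}E_{w,x} = 0$ for $(v,w)\in E(G)$, so $\langle H_v, H_w\rangle = \sum_x \langle E_{v,x}h, E_{w,x}h\rangle = 0$ on every edge of $G$; thus $\{H_v\}$ is an orthonormal representation of $\overline G$. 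Taking the handle $c_0 = \tfrac{1}{\sqrt c}\sum_x h\otimes e_x$ (a unit vector) gives $\langle c_0, H_v\rangle = \tfrac1{\sqrt c}\langle h, \sum_x E_{v,x}h\rangle = \tfrac1{\sqrt c}$ for every $v$, so the dual form yields $\vartheta(\overline G) \le \max_v |\langle c_0, H_v\rangle|^{-2} = c = \chi_{C^*}(G)$.

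The genuinely load-bearing step, and the one I would be most careful about, is verifying feasibility in the primal construction, in particular the normalization $\operatorname{tr}(X) = 1$: this is exactly where I use that the source graph in the clique direction is the \emph{complete} graph $K_c$, so that $E_{v,x}E_{w,x} = 0$ holds for all $v \ne w$ and collapses $\sum_x A_x^2$ to $cI$. Beyond that, the only delicate points are bookkeeping: keeping the orthogonality conventions straight (orthogonality imposed on \emph{non}-edges, and $\overline G$ versus $G$), identifying the primal support constraint with membership in $\cl S_G$, and confirming that the standard complex, arbitrary-dimensional versions of both theta formulations apply verbatim to operators coming from an abstract representation.
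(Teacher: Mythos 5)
Your proof is correct, but it takes a genuinely different route from the paper's. The paper dispatches the proposition by reduction: Proposition~\ref{C*impliesB} shows that $G \stackrel{C^*}{\rightarrow} H$ implies $G \stackrel{B}{\rightarrow} H$ (by forming the vectors $h^v_x = E_{v,x}h$ from a representation and a unit vector), and then it cites \cite[Theorem 6]{CMRSSW}, which characterizes $G \stackrel{B}{\rightarrow} K_c$ as $\vartheta(\overline{G}) \le \vartheta(\overline{K_c}) = c$; applying this once with target $K_c$ and once with source $K_d$ gives the two inequalities. You instead inline exactly the special cases of that cited theorem: your dual-side vectors $H_v = \sum_x E_{v,x}h \otimes e_x$ are the CMRSSW data $h^v_x = E_{v,x}h$ repackaged into a single orthonormal representation of $\overline{G}$ with handle $c_0$, and your primal-side construction rests on the nice structural observation that each $A_x = \sum_v E_{v,x}$ is itself a projection (the cross terms $E_{v,x}E_{w,x}$ vanish because $K_c$ is complete and $(x,x)\notin E(G)$), whence $\sum_x A_x^2 = \sum_x A_x = cI$ yields a feasible matrix of value $c$ supported in $\cl S_G$. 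All your verifications check out, including the conventions most likely to go wrong: orthogonality is imposed on edges of $G$ (non-edges of $\overline{G}$), relation (2) is invoked only with legitimately non-adjacent target pairs, and the orthogonality $E_{v,x}E_{v,y}=0$ for $x\ne y$ does follow from projections summing to $I$. The two caveats you flag are real but harmless: all vectors span a finite-dimensional subspace, and complexity is handled by replacing $X$ with $\operatorname{Re}(X)$ (still feasible, same objective since $\langle J, X\rangle = c$ is real) in the primal, while in the dual your inner products $\langle c_0, H_v\rangle = 1/\sqrt{c}$ are already real and positive, so realification loses nothing. As for what each approach buys: the paper's proof is two lines and leverages the full monotonicity theorem of \cite{CMRSSW}, whereas yours is self-contained, never mentions $B$-homomorphisms, works verbatim for infinite-dimensional representations, and isolates the projection structure of the operators $A_x$ that drives the clique-side bound.
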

\begin{proof}
Let $c:=\chi_{C^*}(G)$. If we combine \ref{C*impliesB} with \cite[Theorem 6]{CMRSSW} we know that
$$G \stackrel{C^*}{\rightarrow} K_c \implies G \stackrel{B}{\rightarrow} K_c \iff \vartheta(\overline{G})\leq \vartheta(\overline{K_n})=c.$$ Similarly, if you apply the above proof to $K_d \stackrel{C^*}{\rightarrow} G$, where $d:=\omega_{C^*}(G)$,  you get the remaining inequality. 
\end{proof}

\begin{remark} Since $G \stackrel{qc}{\rightarrow} K_c \implies G
  \stackrel{C^*}{\rightarrow} K_c$, we have that
$\chi_{qc}(G) \ge \chi_{C^*}(G),$ but we don't know the relation
between $\chi_{C^*}(G)$ and $\chi_{vect}(G).$
\end{remark}

This leads to the following results:

\begin{thm}Let $G$ be a graph.
\begin{enumerate}
\item $\chi(G)$ is the least integer $c$ for which there is an abelian representation of $\cl A(G, K_c).$
\item $\chi_q(G)$ is the least integer $c$ for which $\cl A(G, K_c)$ has a finite dimensional representation.
\item $\chi_{qc}(G)$ is the least integer $c$ for which $\cl A(G,
  K_c)$ has a tracial state.
\item $\chi_{C^*}(G)$ is the least integer $c$ for which $\cl A(G,
  K_c)$ exists.
\end{enumerate}
\end{thm}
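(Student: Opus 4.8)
The plan is to read off all four statements as direct corollaries of Theorem~\ref{trhomo}, specializing the target graph to $H = K_c$ and invoking the preceding proposition that identifies $\chi_x(G)$ with the least integer $c$ for which $G \stackrel{x}{\rightarrow} K_c$. The only general principle at work is that if two predicates $P(c)$ and $Q(c)$ on positive integers satisfy $P(c) \iff Q(c)$ for each fixed $c$, then the least $c$ making $P$ true equals the least $c$ making $Q$ true; so no separate monotonicity argument in $c$ is required, and each item reduces to a fixed-$c$ biconditional already in hand.

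For item (2) I would argue as follows. By the preceding proposition, $\chi_q(G)$ is the least $c$ with $G \stackrel{q}{\rightarrow} K_c$. By Theorem~\ref{trhomo}(3), for each fixed $c$ we have $G \stackrel{q}{\rightarrow} K_c$ if and only if $\cl A(G,K_c)$ admits a finite dimensional representation. Combining these gives (2). Item (3) is identical in form: the proposition gives $\chi_{qc}(G)$ as the least $c$ with $G \stackrel{qc}{\rightarrow} K_c$, and Theorem~\ref{trhomo}(1) turns $G \stackrel{qc}{\rightarrow} K_c$ into the existence of a tracial state on $\cl A(G,K_c)$. Item (4) is the most immediate: by definition $\chi_{C^*}(G)$ is the least $c$ with $G \stackrel{C^*}{\rightarrow} K_c$, and $G \stackrel{C^*}{\rightarrow} K_c$ was defined to mean exactly that $\cl A(G,K_c)$ exists, so there is nothing further to prove there.

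Item (1) requires one extra observation, namely that the classical chromatic number $\chi(G)$ is by definition the least $c$ for which a proper $c$-coloring exists, equivalently the least $c$ with $G \rightarrow K_c$. Applying Theorem~\ref{c=l} (or simply working with $\chi = \chi_l$) and then Theorem~\ref{trhomo}(4), which states $G \rightarrow K_c$ if and only if $\cl A(G,K_c)$ has an abelian representation, yields the claim. Here one should also recall the remark following Theorem~\ref{trhomo} that abelian representations coincide with one-dimensional representations, so the statement can be phrased either way.

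I do not expect any genuine obstacle: the content of this theorem lives entirely in Theorem~\ref{trhomo} together with the characterization $\chi_x(G) = \min\{ c : G \stackrel{x}{\rightarrow} K_c\}$, and the present result is merely an assembly of those pieces. The one point deserving care is, in item (1), to route the classical chromatic number through $G \rightarrow K_c$ rather than through one of the quantum arrows, so that the abelian (equivalently one-dimensional) representation clause of Theorem~\ref{trhomo} is the one invoked.
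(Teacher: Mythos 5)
Your proposal is correct and matches the paper's intended argument exactly: the paper states this theorem without proof, presenting it as an immediate consequence of Theorem~\ref{trhomo}, the proposition identifying $\chi_x(G)$ with the least $c$ such that $G \stackrel{x}{\rightarrow} K_c$, and the definition of $\chi_{C^*}$. Your assembly of these pieces, including routing item (1) through $G \rightarrow K_c$ and the abelian-representation clause, is precisely the reasoning the paper leaves implicit.
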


\begin{thm} Let $G$ be a graph.
\begin{enumerate}
\item The problem of determining if $\cl
  A(G,K_3)$ has an abelian representation is
  NP-complete.
\item The problem of determining if
  $\cl A(G, K_3)$ has a finite dimensional representation is
  NP-hard.
\item The problem of determining if $\cl A(G,K_c)$ has a trace is
  solvable by a semidefinite programming problem. 
\end{enumerate}
\end{thm}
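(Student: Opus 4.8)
The plan is to reduce each part to a statement about the classical, quantum, and quantum-commuting chromatic numbers, using the characterizations established above: $\cl A(G,K_c)$ has an abelian (equivalently one-dimensional) representation iff $\chi(G)\le c$, a finite-dimensional representation iff $\chi_q(G)\le c$, and a tracial state iff $\chi_{qc}(G)\le c$. For (1), taking $c=3$ shows that ``$\cl A(G,K_3)$ has an abelian representation'' is exactly the predicate ``$\chi(G)\le 3$'', i.e.\ $3$-colorability. This lies in NP because a proper $3$-coloring is a polynomial-size certificate: its validity is checkable in polynomial time and, by Theorem~\ref{trhomo}(4), it produces an abelian representation, while conversely any abelian representation forces $\chi(G)\le 3$ and hence the existence of such a coloring. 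NP-hardness is immediate since $3$-colorability is NP-complete, so the problem is NP-complete.

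For (2), taking $c=3$ shows that ``$\cl A(G,K_3)$ has a finite-dimensional representation'' is the predicate ``$\chi_q(G)\le 3$''. I would reduce $3$-colorability to this problem by the identity map $G\mapsto G$; this is a correct polynomial-time reduction once we invoke the known collapse $\chi_q(G)\le 3 \iff \chi(G)\le 3$ \cite{CMRSSW}. The implication $\chi(G)\le 3\Rightarrow\chi_q(G)\le 3$ is trivial (a classical coloring is a quantum one), so the substance is the reverse implication, which is exactly what the reduction needs; since $3$-colorability is NP-hard, so is this problem. (The same equivalence also places the problem in NP, so it is in fact NP-complete, but NP-hardness is all we claim.)

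For (3), Theorem~\ref{trhomo}(1) identifies the existence of a trace on $\cl A(G,K_c)$ with the existence of a synchronous $qc$-strategy $p(x,y|v,w)=\tau(e_{v,x}e_{w,y})$ winning the $c$-coloring game. I would encode this as a semidefinite feasibility problem in an $nc\times nc$ real symmetric matrix $M$, whose $((v,x),(w,y))$ entry represents $\tau(e_{v,x}e_{w,y})$, subject to the linear constraints forced by the defining relations: $M\succeq 0$; $M_{(v,x),(v,y)}=0$ for $x\ne y$; $\sum_y M_{(v,x),(w,y)}=M_{(v,x),(v,x)}$ for all $v,w,x$; $\sum_x M_{(v,x),(v,x)}=1$ for all $v$; and $M_{(v,x),(w,x)}=0$ whenever $(v,w)\in E(G)$. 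A genuine trace yields a feasible $M$ by direct substitution, so this direction is routine, and the resulting finite-dimensional feasibility problem is exactly a semidefinite program.

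The hard part will be the converse: manufacturing a tracial state on $\cl A(G,K_c)$ from a feasible $M$. I would factor $M=\big[\langle h_{v,x},h_{w,y}\rangle\big]$ through its positive square root, read the linear constraints as orthogonality and partition-of-unity relations among the vectors $h_{v,x}$, and build a $*$-representation in which the $e_{v,x}$ act as honest projections that form a PVM at each vertex and annihilate one another along edges, equipped with the associated vector trace. The main obstacle is to guarantee that the operator relations---idempotency of each $e_{v,x}$ and $\sum_x e_{v,x}=1$---hold genuinely and not merely at the level of the truncated moments, so that this single finite-dimensional SDP is sufficient as well as necessary. This is where I would lean on the synchronous-correlation machinery of \cite{PSSTW, PT}, which characterizes membership in $Q^s_{qc}$ via tracial Gram matrices and thereby certifies that the moment conditions above already force the existence of the required trace.
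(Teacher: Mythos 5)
Your part (1) is correct and is the paper's argument: by Theorem~\ref{trhomo}(4), $\cl A(G,K_3)$ has an abelian representation if and only if $\chi(G)\le 3$, and $3$-colorability is NP-complete \cite{Da}. Parts (2) and (3), however, each contain a genuine gap.

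In (2), your reduction rests entirely on the ``known collapse $\chi_q(G)\le 3 \iff \chi(G)\le 3$,'' which you attribute to \cite{CMRSSW}. No such theorem appears there, and it does not seem to be known at all (the classical results of this type cover $c=2$ and the rank-one variant of the quantum chromatic number, not $\chi_q$ itself). The paper's own remark immediately following this theorem --- that no algorithm is known for deciding $\chi_q(G)\le 3$ --- is incompatible with your premise: if the collapse held, exhaustive search for a classical $3$-coloring would be such an algorithm. The paper instead cites \cite[Theorem 1]{Ji}, where an NP-complete problem is polynomially reduced to deciding whether $\chi_q(G)=3$ via binary constraint system games and locally commutative reductions; that nontrivial reduction is exactly what replaces your unavailable equivalence, and it yields NP-hardness without any collapse. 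Your parenthetical claim that the problem is moreover in NP is unsupported for the same reason: with no known dimension bound on finite-dimensional representations, there is no obvious polynomial-size certificate.

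In (3), the concrete SDP you write down is provably not equivalent to the existence of a trace. A feasible $M$ is precisely the Gram matrix of vectors $h_{v,x}$ that are orthogonal within each vertex block, vanish on the edge entries, and (as your marginal constraints force: setting $u_v=\sum_x h_{v,x}$, one gets $\langle u_v,u_w\rangle=\|u_v\|^2=\|u_w\|^2$, hence $\|u_v-u_w\|^2=0$) have a common unit sum. So, up to the entrywise nonnegativity you omitted, feasibility of your program is exactly a winning synchronous strategy in $Q^s_{vect}(n,c)$, i.e., $\chi_{vect}(G)\le c$ --- not $\chi_{qc}(G)\le c$. These genuinely differ: as recalled in the appendix, there is a $15$-vertex graph with $\chi_{vect}(G)=7$ while $\chi_{qc}(G)=8$, so for $c=7$ your SDP is feasible even though, by Theorem~\ref{trhomo}(1), $\cl A(G,K_7)$ admits no tracial state. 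The ``synchronous-correlation machinery'' you lean on cannot close this gap, because it runs in the opposite direction: a trace produces such a Gram matrix, but second moments satisfying your constraints do not determine a trace, and for this relaxation the converse is simply false. The paper's proof does not attempt to build the SDP by hand; it cites the specific theorem of \cite{PSSTW} producing a spectrahedron $S_{n,c}\subseteq\bb R^{n^2c^2}$ and a linear functional $L_G$ such that $\chi_{qc}(G)\le c$ if and only if $L_G$ vanishes at some point of $S_{n,c}$ --- a strictly stronger statement than your one-level moment relaxation --- and then combines it with Theorem~\ref{trhomo}(1).
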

\begin{proof} We have shown that $\cl A(G,K_3)$ has an abelian representation if and only
  if $G$ has a 3-coloring and this latter problem is NP-complete \cite{Da}.

In \cite[Theorem 1]{Ji}, it is proven that an NP-complete problem is
polynomially reducible to determining if $\chi_q(G)=3$. Hence, this
latter problem is NP-hard. 

In \cite{PSSTW}, it is proven that for each $n$ and $c$ there is a
spectrahedron $S_{n,c} \subseteq \bb R^{n^2c^2}$ such that for each graph $G$ on $n$ vertices
there is a linear functional $L_G : \bb R^{n^2c^2} \to \bb R$
with the property that $\chi_{qc}(G) \le c$ if and only if there is a
point $p \in S_{n,c}$ with $L_G(p) =0.$ Thus, determining if
$\chi_{qc}(G) \le c$ is solvable by a semidefinite programming
problem. But we have seen that $\chi_{qc}(G) \le c$ if and only if
$\cl A(G, K_c)$ has a trace.
\end{proof}

\begin{remark}
Currently, there are no known algorithms for
  determining if $\chi_q(G) \le 3,$ i.e., for determining if $\cl
  A(G,K_3)$ has a finite dimensional representation.
\end{remark}

\begin{remark} We do not know the complexity level of determining if
  $\cl A(G,H)$ exists. In particular, we do not know the complexity
  level of determining if $G \stackrel{C^*}{\rightarrow} K_3,$ or any
  algorithm.
\end{remark}

\begin{remark} In \cite{CMRSSW} it is proven that $\chi_{vect}(G) =
  \lceil \vartheta^+( \overline{G}) \rceil,$ which is solvable by an
  SDP.
\end{remark}

\begin{remark} There is a family of finite input, finite output games
  that are called {\bf synchronous games}\cite{DP}, of which the graph
  homomorphism game is a special case. For any synchronous game $\cl
  G$ we can construct the C$^*$-algebra of the game $\cl A(\cl G)$ and
  there are analogues of many of the above theorems. For instance, the
  game will have a winning qc-strategy, q-strategy or l-strategy if
  and only if $\cl A(\cl G)$ has a trace, finite dimensional, or
  abelian representation, respectively.
\end{remark}

\section{Factorization of Graph Homomorphisms}

In this section, we show that the CP maps that arise from graph
homomorphisms have a canonical factorization involving $\cl A(G,H).$

\begin{prop} Let $G$ and $H$ be graphs on $n$ and $m$ vertices, respectively. The map $\Gamma: M_n \to M_m(\cl A(G,H))$ defined on matrix units by $\Gamma(E_{v,w}) = \sum_{x,y} E_{x,y} \otimes e_{v,x}e_{w,y}$ is CP.
\end{prop}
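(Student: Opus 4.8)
The plan is to prove complete positivity of $\Gamma$ via Choi's theorem, exactly as in Proposition~\ref{phipdefn}, but now working in the C*-algebra $M_{nm}(\cl A(G,H)) \cong M_n \otimes M_m \otimes \cl A(G,H)$ rather than in scalar matrices. The Choi "matrix" here is the element
\[ P := \sum_{v,w} E_{v,w} \otimes \Gamma(E_{v,w}) = \sum_{v,w,x,y} E_{v,w} \otimes E_{x,y} \otimes e_{v,x}e_{w,y} \in M_n \otimes M_m \otimes \cl A(G,H), \]
and complete positivity of $\Gamma$ is equivalent to $P$ being a positive element of this C*-algebra. So the whole task reduces to showing $P \ge 0$.

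Let me sketch how I'd verify $P \ge 0$. The natural move is to exhibit $P$ as a sum of the form $V^*V$, or more concretely to write it in terms of the projections $e_{v,x}$ so that positivity is manifest. Consider in $M_{nm}(\cl A(G,H)) = B(\bb C^n \otimes \bb C^m) \otimes \cl A(G,H)$ the element $W := \sum_{v,x} (e_v \otimes f_x) \otimes \langle \text{something} \rangle$—more precisely, I would look for operators $W_{v,x}$ with $P = \sum (\text{block})^* (\text{block})$. The cleanest approach: define, for the canonical bases $\{e_v\}$ of $\bb C^n$ and $\{f_x\}$ of $\bb C^m$, the element $T = \sum_{v,x} E_{e_v, f_x} \otimes e_{v,x}$ acting on $\bb C^n \otimes \bb C^m$ tensored with $\cl A(G,H)$, where $E_{e_v,f_x}$ is the appropriate rank-one operator, and then compute $T^*T$. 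Since each $e_{v,x}$ is a projection, the products $e_{v,x} e_{w,y}$ appearing in the cross terms will reproduce precisely the coefficients of $P$. The key algebraic input is that the $e_{v,x}$ are \emph{self-adjoint idempotents}, so $e_{v,x}^* e_{w,y} = e_{v,x} e_{w,y}$, which is what makes the $\langle h_{v,x}, h_{w,y}\rangle$-style computation in Proposition~\ref{phipdefn} carry over verbatim with inner products replaced by operator products.

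Concretely, I expect the identity $P = S^*S$ to hold for a suitable $S \in M_{m} \big( B(\bb C^n) \otimes \cl A(G,H) \big)$ (or an element of $M_{nm}(\cl A(G,H))$), built from the generating projections, and positivity is then automatic since any element of the form $S^*S$ in a C*-algebra is positive. The main obstacle is purely bookkeeping: correctly tracking the three tensor legs ($M_n$, $M_m$, and $\cl A(G,H)$) and the index pairing so that the cross terms $e_{v,x}e_{w,y}$ land in the $(v,x),(w,y)$ block with the right matrix units $E_{v,w}\otimes E_{x,y}$. Once the factorization $P = S^*S$ is written down, verifying it is a routine matching of coefficients, and no use of the graph-edge relations (2) is even required—$P \ge 0$ holds for \emph{any} family of projections satisfying relation (1), which is why this $\Gamma$ is the universal/canonical CP map through which every winning strategy factors.
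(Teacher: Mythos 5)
Your strategy is sound and genuinely different from the paper's: the paper never invokes Choi's theorem, but instead exhibits $\Gamma$ directly in manifestly CP form, $\Gamma(A) = Z^*(A \otimes I)Z$, where $Z = \sum_{w,y} E_{w,y} \otimes e_{w,y} \in M_{n,m}(\cl A(G,H))$ and the $E_{w,y}$ are the \emph{rectangular} $n \times m$ matrix units --- conjugation of the CP map $A \mapsto A \otimes I$ by a fixed element is automatically CP, and the only algebraic input is $e_{v,x}^* = e_{v,x}$, exactly the self-adjointness you identified. Your Choi-element route works too (Choi's theorem does hold for maps from $M_n$ into an arbitrary C*-algebra, and, as you note, neither relation (2) nor even (1) is needed for positivity), but your bookkeeping needs one correction: your candidate $T = \sum_{v,x} E_{e_v,f_x} \otimes e_{v,x}$, with $E_{e_v,f_x} = e_v f_x^*$ rank one, is precisely the paper's $Z$, and $T^*T$ is \emph{not} $P$ --- the $v$-indices contract and one gets the $m \times m$ element $\Gamma(I_n) = Z^*Z$, not the $nm \times nm$ Choi element. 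The correct arrangement is to re-shape the generators into the column $S = \sum_{v,x} (e_v \otimes f_x) \otimes e_{v,x} \in M_{nm,1}(\cl A(G,H))$, for which
\[ SS^* = \sum_{v,w,x,y} (e_v \otimes f_x)(e_w \otimes f_y)^* \otimes e_{v,x}e_{w,y} = P \ge 0, \]
which completes your argument; equivalently, inserting the middle term $A \otimes I$ between $Z^*$ and $Z$ is exactly what prevents the contraction and is the paper's proof. As for what each approach buys: yours transports the proof of Proposition~\ref{phipdefn} verbatim, with inner products $\langle h_{v,x}, h_{w,y}\rangle$ replaced by operator products $e_{v,x}e_{w,y}$, which is conceptually illuminating; the paper's conjugation formula is a stronger structural statement that gets reused immediately --- it yields $\|\Gamma\|_{cb} = \|\Gamma(I)\| = \|Z^*Z\| = \|ZZ^*\|$, the key step in the later bound $\|\phi_p\|_{cb} \le \vartheta(G)$.
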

\begin{proof} Let $E_{v,x}, v \in V(G), x \in V(H)$ denote the $n \times m$ matrix units.  Let $Z= \sum_{w,y} E_{w,y} \otimes e_{w,y} \in M_{n,m}(\cl A(G,H)).$ Then
\[\Gamma( \sum_{v,w} c_{v,w} E_{v,w}) = Z^* \big( c_{v,w}E_{v,w} \otimes I \big) Z,\]
where $I$ denotes the identity of $\cl A(G,H)$ and $\big( c_{v,w}E_{v,w} \otimes I \big) \in M_n( \cl A(G,H)).$
\end{proof}

Let $p(x,y|v,w) \in Q^s_{qc}(n,m)$ be a winning $qc$-strategy for a graph homomorphism from $G$ to $H$. Then there is a tracial state $\tau: \cl A(G,H) \to \bb C$ such that  $p(x,y|v,w) = \tau(e_{v,x}e_{w,y})$ and hence $\phi_p$ factors as $\phi_p = (id_m \otimes \tau) \circ \Gamma,$ where $id_m \otimes \tau: M_m(\cl A(G,H)) \to M_m.$ Conversely, if $\tau: \cl A(G,H) \to \mathbb{C}$ is any tracial state, then $(id_m \otimes \tau) \circ \Gamma = \phi_p$ for some winning $qc$-strategy $p(x,y|v,w) \in Q^s_{qc}(n,m).$

Similarly, this map $\phi_p$ arises from a winning $q$-strategy if and only if it arises from a $\tau$ that has a finite dimensional GNS representation and from a winning $l$-strategy if and only if it arises from a $\tau$ with an abelian  GNS representation.


This factorization leads to the following result. Recall that $\vartheta(G)$ denotes the Lovasz theta function of a graph and let $\|\phi\|_{cb}$ denote the completely bounded norm of a map.

\begin{lemma} Let $G$ be a graph on $n$ vertices, let $\cl H$ be a Hilbert space, let $P_{v,w} \in B(\cl H), \, \forall v,w \in V(G)$ and regard $P=(P_{v,w})$ as an operator on $\cl H \otimes \bb C^n.$ If
\begin{enumerate}
\item $P=(P_{v,w}) \ge 0$,
\item $P_{v,v} =I_{\cl H},$
\item $(v,w) \in E(G) \implies P_{v,w} =0,$
\end{enumerate}
then $\|P\| \le \vartheta(G).$
\end{lemma}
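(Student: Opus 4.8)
The plan is to derive the bound directly from the dual (eigenvalue) description of the Lov\'asz theta function, combined with the Schur (entrywise) product theorem, working with the operator-valued entries so that no reduction to the scalar case is needed. Recall the sandwich-theorem characterization
\[
\vartheta(G) = \inf\{\lambda_{\max}(K) : K=(K_{v,w})\text{ real symmetric},\ K_{v,v}=1,\ K_{v,w}=1 \text{ whenever } v\neq w,\ (v,w)\notin E(G)\},
\]
where the entries $K_{v,w}$ with $(v,w)\in E(G)$ are left unconstrained. First I would fix an arbitrary feasible matrix $K$ of this form; the goal is to show $\|P\|\le \lambda_{\max}(K)$ for each such $K$, after which taking the infimum over $K$ gives $\|P\|\le\vartheta(G)$.

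The key observation is that the Schur product of $K$ with $P$ reproduces $P$. Writing $(K\circ P)_{v,w}=K_{v,w}P_{v,w}$, on the diagonal we get $K_{v,v}P_{v,v}=1\cdot I_{\cl H}=P_{v,v}$ by hypothesis (2); for a non-edge $(v,w)$ we have $K_{v,w}=1$, so the block is unchanged; and for an edge $(v,w)$ we have $P_{v,w}=0$ by hypothesis (3), so the unconstrained value of $K_{v,w}$ is irrelevant. Hence $K\circ P=P$. The same bookkeeping, using (2), gives $I_n\circ P = I_{\cl H}\otimes I_n$, the identity $I$ of $B(\cl H\otimes\bb C^n)$.

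Next I would invoke Schur positivity in the operator-valued setting: if $S$ is a scalar positive semidefinite $n\times n$ matrix and $P\in M_n(B(\cl H))$ is positive, then $S\circ P\ge 0$. This follows by writing $S=\sum_k s_ks_k^{*}$ and noting that $(s_ks_k^{*})\circ P = D_kPD_k^{*}\ge 0$, where $D_k=I_{\cl H}\otimes\big(\text{diagonal matrix built from }s_k\big)$. Applying this to the scalar positive matrix $\lambda_{\max}(K)I_n-K$ yields
\[
0\le \big(\lambda_{\max}(K)I_n-K\big)\circ P = \lambda_{\max}(K)\,(I_n\circ P) - K\circ P = \lambda_{\max}(K)\,I - P,
\]
so that $P\le \lambda_{\max}(K)\,I$, and since $P\ge 0$ we conclude $\|P\|=\lambda_{\max}(P)\le\lambda_{\max}(K)$. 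Taking the infimum over all feasible $K$ gives $\|P\|\le\vartheta(G)$.

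The two points that require care are matching the convention in the dual description of $\vartheta(G)$ so that the prescribed entries of $K$ sit exactly on the diagonal and the non-edges (precisely where $P$ is allowed to be nonzero), and the operator-valued form of the Schur product theorem. I expect the latter to be the only genuine obstacle, but it is routine; once it is in place the identity $K\circ P=P$ does all the work.
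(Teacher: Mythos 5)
Your proof is correct, but it takes a genuinely different route from the paper's. The paper reduces the operator statement to the scalar one: writing $k=\sum_v k_v\otimes e_v$ and normalizing, it shows that $\|P\|=\sup\bigl\{\|(\langle P_{v,w}h_w,h_v\rangle)\|_{M_n}:\|h_v\|=1\bigr\}$, observes that each scalar compression $(\langle P_{v,w}h_w,h_v\rangle)$ is positive semidefinite with unit diagonal and vanishes on the edges of $G$, and then quotes the bound $\|\cdot\|\le\vartheta(G)$ for such matrices from \cite{Lo}. You instead stay at the block-operator level: you invoke the dual eigenvalue characterization $\vartheta(G)=\min\lambda_{\max}(K)$ over symmetric $K$ with ones on the diagonal and the non-edges (also from \cite{Lo}), check that $K\circ P=P$ and $I_n\circ P=I$ precisely because $P$ vanishes where $K$ is unconstrained, and apply an operator-valued Schur product theorem, whose rank-one proof $(s_ks_k^*)\circ P=D_kPD_k^*\ge 0$ is valid verbatim for arbitrary $\cl H$, so there is no gap there. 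Your route buys the formally stronger operator inequality $0\le P\le\vartheta(G)I$ (from which the norm bound follows since $P\ge 0$) and is self-contained modulo the SDP duality for $\vartheta$; the paper's route is shorter given the scalar fact and additionally records the useful identity expressing $\|P\|$ as a supremum over scalar compressions, which has independent interest. Both arguments ultimately rest on \cite{Lo}, just on different characterizations of $\vartheta(G)$, and your bookkeeping of conventions --- prescribed entries on the diagonal and non-edges, free entries exactly where $P$ is forced to vanish --- is the correct matching.
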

\begin{proof} Any vector $k \in \cl H \otimes \bb C^n$ has a unique representation as $k = \sum_v k_v \otimes e_v,$ where $k_v \in \cl H$ and $e_v \in \bb C^n$ denotes the standard orthonormal basis. Set $h_v = k_v/\|k_v\|$ (with $h_v=0$ when $k_v=0$), and $\lambda_v = \|k_v\|.$ Let $y= \sum_v \lambda_v e_v \in \bb C^n$ so that $\|y\|_{\bb C^n} = \|k\|.$ 
Set $B_k=\big( \langle P_{v,w}h_w, h_v \rangle \big) \in M_n = B(\bb C^n),$ so that
$\langle P k, k \rangle_{\cl H \otimes \bb C^n} = \langle B_k y,y \rangle_{\bb C^n}.$

This observation shows that if for any $h_v \in \cl H, \, \forall v \in V(G)$ with $\|h_v\|=1$ we let $\big( \langle P_{v,w}h_w, h_v \rangle \big) \in M_n = B(\bb C^n),$
then
\[ \|P\|= \sup \{ \|( \langle P_{v,w} h_w, h_v \rangle ) \|_{M_n} : \|h_v\|= 1 \}.\]

Now by the above hypotheses each matrix $(\langle P_{v,w}h_w, h_v \rangle)\ge 0,$ has all diagonal entries equal to 1 and  $(v,w) \in E(G) \implies \langle P_{v,w}h_w,h_v \rangle =0.$ Thus, by \cite{Lo},  $\|(\langle P_{v,w}h_w,h_v \rangle) \| \le \vartheta(G).$
\end{proof}

\begin{prop} Let $p(x,y|v,w) \in Q^s_{qc}(n,m)$ be a winning
  $qc$-strategy for a graph homomorphism from $G$ to $H.$  Then $\|\phi_p\|_{cb} \le \vartheta(G).$
\end{prop}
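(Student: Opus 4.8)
The plan is to exploit the factorization $\phi_p = (\mathrm{id}_m \otimes \tau) \circ \Gamma$ established just above, where $\tau$ is the tracial state on $\cl A(G,H)$ implementing the winning $qc$-strategy (so that $p(x,y|v,w) = \tau(e_{v,x}e_{w,y})$) and $\Gamma : M_n \to M_m(\cl A(G,H))$ is the completely positive map of the preceding proposition. Since $\tau$ is a state, $\mathrm{id}_m \otimes \tau : M_m(\cl A(G,H)) \to M_m$ is unital and completely positive, hence $\|\mathrm{id}_m \otimes \tau\|_{cb} = 1$; and since $\Gamma$ is completely positive on the unital C*-algebra $M_n$, we have $\|\Gamma\|_{cb} = \|\Gamma(I_n)\|$. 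Thus $\|\phi_p\|_{cb} \le \|\Gamma(I_n)\|$, and the whole problem reduces to showing $\|\Gamma(I_n)\| \le \vartheta(G)$.

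First I would compute $\Gamma(I_n)$. Because the projections $\{e_{v,x}\}_x$ are pairwise orthogonal for fixed $v$ (they sum to $1$), the cross terms vanish and $\Gamma(I_n) = \sum_v \sum_{x,y} E_{x,y}\otimes e_{v,x}e_{v,y} = \sum_x E_{x,x} \otimes \big(\sum_v e_{v,x}\big)$. This operator is block-diagonal in the $M_m$ variable, so $\|\Gamma(I_n)\| = \max_x \|\sum_v e_{v,x}\|$. The crux is therefore the single-color estimate $\|\sum_v e_{v,x}\| \le \vartheta(G)$ for each fixed $x$.

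To prove this I would fix $x$, represent $\cl A(G,H)$ faithfully on a Hilbert space $\cl H$, and set $q_v := e_{v,x} \in B(\cl H)$. These are projections, and taking $y = x$ in relation (2)—legitimate since $H$ has no loops, so $(x,x)\notin E(H)$—gives $q_v q_w = 0$ whenever $(v,w)\in E(G)$. Viewing the column $C = (q_v)_v : \cl H \to \cl H \otimes \bb C^n$, one checks $C^*C = \sum_v q_v$ and $CC^* = (q_v q_w)_{v,w}$, so that $\|\sum_v q_v\| = \|(q_v q_w)_{v,w}\|$. The matrix $P := (q_v q_w)_{v,w}$ is positive and vanishes on edges, but its diagonal blocks are $P_{v,v} = q_v \le I$ rather than $I$, so the Lemma does not apply verbatim. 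I would repair this by padding: set $P' := P + \mathrm{diag}_v(I - q_v)$, which is positive, satisfies $P'_{v,v} = I$, and still has $P'_{v,w} = q_v q_w = 0$ for $(v,w)\in E(G)$; the Lemma then yields $\|P'\| \le \vartheta(G)$, and since $0 \le P \le P'$ monotonicity of the norm on positive operators gives $\|\sum_v q_v\| = \|P\| \le \|P'\| \le \vartheta(G)$.

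Putting the pieces together yields $\|\phi_p\|_{cb} \le \|\Gamma(I_n)\| = \max_x \|\sum_v e_{v,x}\| \le \vartheta(G)$. The only genuinely non-routine point is the last one: the operator $P$ built from the color class $x$ satisfies every hypothesis of the Lemma except that its diagonal blocks are the projections $q_v$ instead of the identity, and the small padding argument—adding the positive diagonal $I - q_v$ and invoking norm monotonicity for positive operators—is precisely what bridges that gap. Everything else is a direct assembly of the factorization, the identity $\|\sum_v q_v\| = \|(q_v q_w)\|$, and the equality $\|\Gamma\|_{cb} = \|\Gamma(I_n)\|$ for completely positive $\Gamma$.
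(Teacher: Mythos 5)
Your proof is correct, and it shares the paper's skeleton---the factorization $\phi_p = (\mathrm{id}_m \otimes \tau)\circ \Gamma$, the reduction $\|\phi_p\|_{cb} \le \|\Gamma\|_{cb} = \|\Gamma(I_n)\|$ for a CP map, and the norm lemma as the closing estimate---but the endgame is genuinely different. The paper writes $\Gamma(I_n) = Z^*Z$ with $Z = \sum_{w,y} E_{w,y}\otimes e_{w,y} \in M_{n,m}(\cl A(G,H))$ and makes a single global appeal to the C*-identity $\|Z^*Z\| = \|ZZ^*\|$: the flipped operator $ZZ^* = \sum_{v,w} E_{v,w}\otimes \bigl( \sum_x e_{v,x}e_{w,x} \bigr)$ is indexed by $V(G)$, its diagonal entries are $\sum_x e_{v,x}e_{v,x} = \sum_x e_{v,x} = I$ precisely because the sum over all colors is kept intact, and its edge entries vanish (again using $(x,x)\notin E(H)$), so the lemma applies verbatim with no repair needed. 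You instead exploit the block-diagonal structure $\Gamma(I_n) = \sum_x E_{x,x}\otimes \sum_v e_{v,x}$ to reduce to one color class at a time; your column operator $C = (e_{v,x})_v$ is exactly the $x$-th slice of the paper's $Z$, and that slicing is what destroys the identity diagonal---whence your padding $P' = P + \mathrm{diag}_v(I - q_v)$ and the monotonicity step $0 \le P \le P'$, both of which are valid (note that your padding restores by hand, via $q_v + (I - q_v) = I$, the identity the paper gets automatically from summing over colors). As for what each buys: the paper's route is shorter, with one norm flip replacing your decomposition, padding, and monotonicity; yours makes explicit the sharper per-color bound $\|\sum_v e_{v,x}\| \le \vartheta(G)$ for each fixed $x$ (implicit in the paper, since $\|Z^*Z\| = \max_x \|\sum_v e_{v,x}\|$) and isolates exactly where the lemma's diagonal hypothesis comes from.
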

\begin{proof} Since $id_m \otimes \tau$ is a completely contractive map, we have that
$\|\phi_p\|_{cb} \le \|\Gamma\|_{cb}.$ Since this map is CP, by \cite{vpbook} we have that
\[ \|\Gamma\|_{cb} = \|\Gamma(I) \|= \|Z^*Z\|=\|ZZ^*\|. \]
Since $e_{w,y}^* = e_{w,y}$, we have
\[ ZZ^* = \sum_{v,w,x,y} (E_{v,x} \otimes e_{v,x})(E_{w,y} \otimes e_{w,y})^* =
\sum_{v,w} E_{v,w} \otimes \big(\sum_x e_{v,x}e_{w,x} \big).\]

Now if we let $p_{v,w}$ denote the $(v,w)$-entry of the above matrix in $M_n(\cl A(G,H)),$ then $p_{v,v}= \sum_x e_{v,x} = I.$ When $(v,w) \in E(G),$ then by Definition~\ref{GHalg}(3), we have that $p_{v,w}=0.$

Hence, by the above lemma, $\|ZZ^*\|\le \vartheta(G).$
\end{proof}

\section{Quantum Cores of Graphs}

A {\bf retract} of a graph $G$ is a subgraph $H$ of $G$ such that
there exists a graph homomorphism $f: G\to H$, called a 
{\bf retraction} with $f(x)=x$ for any $x\in V(H)$. A {\bf core} is a graph which does not retract to a proper subgraph \cite{homo}. 

Note that if $f:G \to G$ is an idempotent graph homomorphism and we
define a graph $H$ by setting $V(H) = f(V(G))$ and defining $(x,y) \in
E(H)$ if and only if there exists $(v,w) \in E(G)$ with $f(v) =x, f(w)
=y,$ then $H$ is a subgraph of $G$ and $f$ is a retraction onto $H.$
We denote $H$ by $f(G).$

The following result is central to proofs of the existence of cores of graphs.

\begin{thm}[\cite{homo}]\label{retraction}
Let $f$ be an endomorphism of a graph $G$. Then there is an $n$
such that $f^n$ is idempotent and a retraction onto $R = f^n(G)$.
\end{thm}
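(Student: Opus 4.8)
The plan is to split the statement into two largely independent parts: first, to show that some power $f^n$ is idempotent, and second, to invoke the observation stated immediately before the theorem, which says that an idempotent endomorphism is automatically a retraction onto its image.

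For the idempotency I would exploit the finiteness of $G$. Since $V(G)$ is finite, there are only finitely many maps $V(G) \to V(G)$, and hence the endomorphism monoid $\mathrm{End}(G)$, which is closed under composition because compositions of graph homomorphisms are graph homomorphisms, is finite. Consequently the sequence $f, f^2, f^3, \dots$ cannot consist of distinct elements, so by the pigeonhole principle there are integers $s \ge 1$ and $p \ge 1$ with $f^{s} = f^{s+p}$. A routine induction then yields $f^a = f^b$ whenever $a, b \ge s$ and $a \equiv b \pmod p$. The crux is then to choose $n$ to be a multiple of $p$ with $n \ge s$; for such an $n$ one has $2n \ge s$ and $2n \equiv n \equiv 0 \pmod p$, so $f^{2n} = f^n$, that is, $(f^n)^2 = f^n$, proving $f^n$ is idempotent.

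With idempotency in hand, the second part is immediate from the remark preceding the theorem: applying that observation to the idempotent graph homomorphism $g := f^n$ shows that $g$ is a retraction onto $g(G) = f^n(G) = R$. Concretely, the image $R$ is a subgraph of $G$ because $f$ is a homomorphism, so every edge of $G$ is carried to an edge of $G$; and $g$ fixes each vertex of $R$ pointwise, since any $x \in R$ has the form $x = g(u)$, whence $g(x) = g(g(u)) = g(u) = x$ by idempotency.

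The only step requiring genuine care is the finite-monoid argument, and specifically the choice of the exponent $n$. It is not enough merely to know that the powers of $f$ are eventually periodic; one must pick $n$ simultaneously large enough to be past the pre-period $s$ and divisible by the period $p$, so that doubling it returns to itself modulo $p$. This is precisely the standard fact that every element of a finite semigroup has an idempotent power, and it is where all the bookkeeping lives; everything else follows formally from the definitions and the preceding remark.
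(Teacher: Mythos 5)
Your proof is correct. The paper itself offers no argument for this theorem---it is quoted verbatim from \cite{homo}---so there is no internal proof to compare against; what you have written is the standard argument from that reference: the endomorphism monoid of a finite graph is finite, every element of a finite monoid has an idempotent power, and the remark preceding the theorem converts idempotency into a retraction. Your bookkeeping is exactly right at the one delicate point, namely choosing $n \ge s$ and divisible by the period $p$ so that $f^{2n} = f^n$ (eventual periodicity alone does not suffice), and your verification that $g = f^n$ fixes $R = f^n(G)$ pointwise, via $g(x) = g(g(u)) = g(u) = x$, together with the observation that edges of $G$ map to edges of $R$ by the definition of the image subgraph, completes the retraction half.
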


Our goal in this section is to attempt to define a quantum analogue of the core using completely positive maps, in particular we will use the above theorem as a guiding principle.

For $A=(a_{ij})\in M_n$, denote $||A||_1=\sum_{i,j} |a_{ij}|$ and $\sigma(A)=\sum_{i,j} a_{ij}$. Let $\phi_p: M_n\to M_m$, $\phi_p(E_{vw})=\sum_{x,y}p(x,y|v,w)E_{xy}$, for some $p(x,y|v,w) \in Q^s_{vect}(n,m)$.  Before we continue our discussions on cores we will need the following facts:

\begin{lemma}
$$\sigma(\phi_p(A))=\sigma(A)$$
\end{lemma}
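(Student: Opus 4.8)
The plan is to reduce everything to the single identity $\sum_{x,y} p(x,y|v,w) = 1$, valid for every pair of inputs $(v,w)$, after which the statement drops out from an interchange of finite summations.

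First I would expand an arbitrary $A=(a_{vw}) \in M_n$ in the matrix-unit basis as $A = \sum_{v,w} a_{vw} E_{vw}$ and apply linearity of $\phi_p$ to obtain
\[ \phi_p(A) = \sum_{v,w} a_{vw} \sum_{x,y} p(x,y|v,w)\, E_{xy}. \]
Since $\sigma$ simply adds up all matrix entries, applying it and reordering the (finite) sums gives
\[ \sigma(\phi_p(A)) = \sum_{x,y} \sum_{v,w} a_{vw}\, p(x,y|v,w) = \sum_{v,w} a_{vw} \Big( \sum_{x,y} p(x,y|v,w) \Big). \]
Thus the whole computation hinges on evaluating the inner sum.

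The key step is to show that $\sum_{x,y} p(x,y|v,w) = 1$ for all $v,w$. Because $p \in Q^s_{vect}(n,m)$, the characterization recalled in the proof of Proposition~\ref{phipdefn} supplies vectors $\{h_{v,x}\}$ with $p(x,y|v,w) = \langle h_{v,x}, h_{w,y} \rangle$, subject to $\sum_x h_{v,x} = \sum_x h_{w,x}$ for all $v,w$ and $\|\sum_x h_{v,x}\| = 1$. Using linearity of the inner product in the first variable and conjugate-linearity in the second, I get
\[ \sum_{x,y} p(x,y|v,w) = \Big\langle \sum_x h_{v,x}, \sum_y h_{w,y} \Big\rangle = \Big\| \sum_x h_{v,x} \Big\|^2 = 1, \]
where the second equality uses that the two summed vectors coincide and the third uses the unit-norm condition. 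Substituting back yields $\sigma(\phi_p(A)) = \sum_{v,w} a_{vw} = \sigma(A)$.

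There is no real obstacle here; the only point that requires care is the normalization $\sum_{x,y} p(x,y|v,w) = 1$. For genuine correlations this is merely the assertion that $p(\cdot,\cdot\,|v,w)$ is a probability distribution over the outputs, but in the vect setting (where the entries are inner products rather than probabilities a priori) it must instead be extracted from the orthonormality and unit-norm conditions as above.
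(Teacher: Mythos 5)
Your proof is correct and follows essentially the same route as the paper: both reduce by linearity to matrix units (your reordering of finite sums is just this reduction written out for a general $A$) and then compute $\sum_{x,y} p(x,y|v,w) = \langle \sum_x h_{v,x}, \sum_y h_{w,y} \rangle = \|\eta\|^2 = 1$ using the synchronous vect characterization. You also correctly flag the one point of care, namely that the normalization must come from the vector conditions rather than from interpreting the entries as probabilities.
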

\begin{proof}
By linearity it is enough to show the claim for matrix units,
$$\sigma(\phi_p(E_{vw}))=\sum_{x,y} p(x,y|v,w)=\sum_{x,y} \langle h_{v,x}, h_{w,y} \rangle=$$
$$\langle \sum_x h_{v,x}, \sum_y h_{w,y} \rangle=\langle \eta, \eta \rangle=1=\sigma(E_{vw})$$
\end{proof}


\begin{lemma}\label{1-norm}
Let $A=(a_{vw})$ be a matrix, then $$||\phi_p(A)||_{1}\le ||A||_{1}$$ 
If the entries of $A$ are non-negative, then $\|\phi_p(A)\|_1 = \|A\|_1$. 
\end{lemma}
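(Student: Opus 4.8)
The plan is to prove the inequality $\|\phi_p(A)\|_1 \le \|A\|_1$ first for matrix units and then extend by the triangle inequality, and separately to upgrade to equality when the entries are non-negative by combining the inequality with the preceding $\sigma$-preservation lemma. Writing $A = (a_{vw})$, linearity gives $\phi_p(A) = \sum_{v,w} a_{vw}\phi_p(E_{vw})$, so by the triangle inequality for $\|\cdot\|_1$ it suffices to show that each $\|\phi_p(E_{vw})\|_1 \le |a_{vw}|^{-1}|a_{vw}| = 1$; more precisely, I would first establish the clean statement $\|\phi_p(E_{vw})\|_1 \le 1$ and then sum: $\|\phi_p(A)\|_1 \le \sum_{v,w} |a_{vw}|\,\|\phi_p(E_{vw})\|_1 \le \sum_{v,w}|a_{vw}| = \|A\|_1$.

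So the crux is the single-matrix-unit bound. Here $\phi_p(E_{vw}) = \sum_{x,y} p(x,y|v,w)E_{xy}$, so $\|\phi_p(E_{vw})\|_1 = \sum_{x,y} |p(x,y|v,w)| = \sum_{x,y} |\langle h_{v,x}, h_{w,y}\rangle|$, using the vector representation from Proposition~\ref{phipdefn}. The key estimate is the Cauchy--Schwarz inequality applied termwise:
\[
\sum_{x,y} |\langle h_{v,x}, h_{w,y}\rangle| \le \sum_{x,y} \|h_{v,x}\|\,\|h_{w,y}\| = \Big(\sum_x \|h_{v,x}\|\Big)\Big(\sum_y \|h_{w,y}\|\Big).
\]
Now I would invoke the orthogonality $h_{v,x}\perp h_{v,y}$ for $x\ne y$, which by the Pythagorean theorem gives $\sum_x \|h_{v,x}\|^2 = \|\sum_x h_{v,x}\|^2 = \|\eta\|^2 = 1$; but I actually need a bound on $\sum_x \|h_{v,x}\|$, not $\sum_x \|h_{v,x}\|^2$. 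This is the step I expect to be the main obstacle, since in general $\sum_x \|h_{v,x}\|$ can exceed $1$ (by Cauchy--Schwarz it is at most $\sqrt{m}$), so the naive termwise Cauchy--Schwarz is too lossy and does not by itself yield the bound $1$.

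To overcome this I would not bound each inner product separately but instead treat the whole sum more carefully. The idea is to choose unimodular scalars $\epsilon_{x,y}$ with $\epsilon_{x,y}\langle h_{v,x}, h_{w,y}\rangle = |\langle h_{v,x}, h_{w,y}\rangle|$ and then exploit a tensor/reorganization trick: form the vector $h = \sum_x h_{v,x}\otimes f_x$ and $k = \sum_y \overline{\epsilon}_{\cdot,y} h_{w,y}\otimes g_y$ in suitable auxiliary spaces, so that $\sum_{x,y}|\langle h_{v,x},h_{w,y}\rangle|$ appears as a single inner product that can be bounded by $\|h\|\|k\|$, where now orthogonality makes $\|h\|^2 = \sum_x\|h_{v,x}\|^2 = 1$ and likewise $\|k\|=1$. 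This mirrors the Choi-matrix computation in Proposition~\ref{phipdefn} and converts the problematic $\sum_x\|h_{v,x}\|$ into the well-behaved $\sum_x\|h_{v,x}\|^2$. For the equality statement under non-negativity, once I have $\|\phi_p(A)\|_1 \le \|A\|_1$, I observe that if all $a_{vw}\ge 0$ then all entries $p(x,y|v,w)$ are themselves non-negative (they are genuine probabilities in the synchronous vector model), so $\|\phi_p(A)\|_1 = \sum_{x,y}(\phi_p(A))_{xy} = \sigma(\phi_p(A))$; then the previous lemma gives $\sigma(\phi_p(A)) = \sigma(A) = \sum_{v,w}a_{vw} = \|A\|_1$, forcing equality.
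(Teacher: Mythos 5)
Your reduction to matrix units is fine, and your equality argument via the $\sigma$-preservation lemma is correct, but there is a genuine gap exactly where you flagged the ``main obstacle,'' and your proposed tensor trick cannot close it. The scalars $\epsilon_{x,y}$ depend on \emph{both} indices, whereas a vector of the form $k=\sum_y \overline{\epsilon}_{\cdot,y}\, h_{w,y}\otimes g_y$ can only carry a coefficient depending on $y$; to realize $\sum_{x,y}\epsilon_{x,y}\langle h_{v,x},h_{w,y}\rangle$ as a single inner product $\langle h,k\rangle$ with $\|h\|=\|k\|=1$ you would need auxiliary unit vectors with $\langle f_x,g_y\rangle=\epsilon_{x,y}$, which is impossible for a general unimodular matrix: if $\langle f_1,g_1\rangle=\langle f_1,g_2\rangle=\langle f_2,g_1\rangle=1$ for unit vectors, equality in Cauchy--Schwarz forces $f_1=g_1=g_2=f_2$, hence $\langle f_2,g_2\rangle=1$, ruling out the sign pattern $\bigl(\begin{smallmatrix}1&1\\1&-1\end{smallmatrix}\bigr)$. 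Worse, the bound you are trying to reach this way is simply false at that level of generality: for orthogonal decompositions $\sum_x h_{v,x}=\sum_y h_{w,y}=\eta$ with $\|\eta\|=1$, the quantity $\sum_{x,y}|\langle h_{v,x},h_{w,y}\rangle|$ can exceed $1$ once negative inner products are allowed. For instance, in $\mathbb{C}^2$ take $\eta=e_1$, $h_{v,1}=\tfrac12(e_1+e_2)$, $h_{v,2}=\tfrac12(e_1-e_2)$, and $h_{w,1}=(\cos^2\theta)\,e_1+(\sin\theta\cos\theta)\,e_2$, $h_{w,2}=(\sin^2\theta)\,e_1-(\sin\theta\cos\theta)\,e_2$; at $\theta=\pi/8$ one gets $\sum_{x,y}|\langle h_{v,x},h_{w,y}\rangle|\approx 1.21$. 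This is exactly the regime of the relaxation $G\stackrel{B}{\to}H$ of \cite{CMRSSW} discussed in the paper, where non-negativity of the inner products is dropped, so no positivity-free argument can work.

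The missing idea is one you yourself invoke later for the equality case: condition (4) in the definition of vectorial correlations already guarantees $p(x,y|v,w)=\langle h_{v,x},h_{w,y}\rangle\ge 0$ for every $p\in Q^s_{vect}(n,m)$. With that, your crux step is immediate and needs no Cauchy--Schwarz at all:
\[
\|\phi_p(E_{vw})\|_1=\sum_{x,y}|p(x,y|v,w)|=\sum_{x,y}p(x,y|v,w)=\Bigl\langle \sum_x h_{v,x},\, \sum_y h_{w,y}\Bigr\rangle=\|\eta\|^2=1,
\]
and your matrix-unit reduction gives $\|\phi_p(A)\|_1\le\sum_{v,w}|a_{vw}|=\|A\|_1$. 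This is essentially the paper's own proof in different clothes: the paper applies the triangle inequality entrywise to $\phi_p(A)$ and uses $\sum_{x,y}p(x,y|v,w)=1$, noting that for entrywise non-negative $A$ the triangle inequality becomes an equality, whereas your derivation of the equality from the $\sigma$-preservation lemma is a valid (and equally short) alternative.
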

\begin{proof}
We have
$$||\phi_p(A)||_{1}=\sum_{x,y}|\sum_{v,w} p(x,y|v,w)a_{v,w}| \le
\sum_{v,w} |a_{v,w}|(\sum_{x,y} p(x,y|v,w))$$ 
$$=\sum_{v,w} |a_{vw}|=||A||_{1}$$
When the entries of $A$ are all non-negative, the first inequality is
an equality. \end{proof}

For the next step in our construction we need to recall the concept of
a {\it Banach generalized limit}. A Banach generalized limit is a positive
linear functional $f$ on $\ell^{\infty}(\bb N)$, such that:
\begin{itemize}
\item if $(a_k) \in \ell^{\infty}(\bb N)$ and $\lim_k a_k$ exists,
  then $f((a_k)) = \lim_k a_k,$
\item if $b_k= a_{k+1}$, then $f((b_k))=f((a_k)).$
\end{itemize}
The existence and construction of these are presented in \cite{conway},
along with many of their other properties. Often a Banach generalized limit functional is written as $glim$.

Now fix a Banach generalized limit $glim$, assume that $n=m$, and that $\phi_p: M_n\to M_n$,
$\phi_p(E_{vw})=\sum_{x,y}p(x,y|v,w)E_{xy}$, for some $p(x,y|v,w) \in
Q^s_{qc}(n,n)$. Fix a matrix $A \in M_n$ and set
\[ a_{x,y}(k) = \langle \phi_p^k(A)e_y, e_x \rangle \]
so that $\phi_p^k(A) = \sum_{x,y} a_{x,y}(k) E_{x,y}.$
By Lemma~\ref{1-norm}, for every pair, $(x,y)$ the sequence
$(a_{x,y}(k)) \in \ell^{\infty}(\bb N).$

We define a map, $\psi_p: M_n \to M_n$ by setting
\[\psi_p(A) = \sum_{x,y} glim((a_{x,y}(k))) E_{x,y}.\]
Alternatively, we can write this as
\[ \psi_p(A) = (id_n \otimes glim)\phi^k_p(A).\]

 \begin{prop} Let $\big( p(x,y|v,w) \big) \in Q^s_{vect}(n,n)$ and let $\psi_p: M_n \to M_n$ be the map obtained as above via some Banach generalized limit, $glim.$ Then:
\begin{enumerate}
\item $\psi_p$ is CP,
\item $\sigma(\psi_p(A))= \sigma(A)$ for all $A \in M_n,$
\item $\|\psi_p(A)\|_1 \le \|A\|_1,$
\item $\psi_p \circ \phi_p = \phi_p \circ \psi_p= \psi_p,$
\item $\psi_p \circ \psi_p = \psi_p.$
\end{enumerate}
\end{prop}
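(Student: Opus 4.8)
The plan is to realize $\psi_p$ as a composition of two completely positive maps, and then read off properties (2)--(5) from the defining properties of $glim$ together with the Lemmas already established. By Lemma~\ref{1-norm} we have $\|\phi_p^k(A)\|_1 \le \|A\|_1$ for every $k$, so the sequence $\Phi(A) := \big(\phi_p^k(A)\big)_k$ is bounded and defines an element of the C*-algebra $\ell^\infty(\bb N, M_n) \cong M_n\big(\ell^\infty(\bb N)\big)$. Each coordinate map $A \mapsto \phi_p^k(A)$ is CP, being an iterate of the CP map $\phi_p$ of Proposition~\ref{phipdefn}, and a map into an $\ell^\infty$-product of C*-algebras is CP exactly when each coordinate is; hence $\Phi$ is CP. Since $glim$ is a positive linear functional on the commutative C*-algebra $\ell^\infty(\bb N)$, it is automatically completely positive, so the slice map $id_n \otimes glim : M_n\big(\ell^\infty(\bb N)\big) \to M_n$ is CP. As $\psi_p = (id_n \otimes glim)\circ \Phi$, this proves (1).

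For (2), finite linearity of $glim$ gives $\sigma(\psi_p(A)) = glim\big((\sigma(\phi_p^k(A)))_k\big)$; applying the $\sigma$-invariance lemma repeatedly forces $\sigma(\phi_p^k(A)) = \sigma(A)$ for every $k$, and $glim$ of a constant sequence returns that constant. For (3), I would first record the elementary inequality $|glim((c_k))| \le glim((|c_k|))$ valid for any bounded complex sequence: writing $glim((c_k)) = r e^{i\theta}$ with $r \ge 0$ and using that $glim$ is real on real sequences, $r = glim\big(\mathrm{Re}(e^{-i\theta}c_k)\big) \le glim((|c_k|))$ by monotonicity. Summing over the entries yields $\|\psi_p(A)\|_1 \le glim\big((\|\phi_p^k(A)\|_1)_k\big)$, and since each term is $\le \|A\|_1$ by Lemma~\ref{1-norm}, monotonicity of $glim$ gives (3).

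The remaining identities rest on shift-invariance of $glim$. Since $\phi_p$ is linear, one verifies entrywise that both $\psi_p(\phi_p(A))$ and $\phi_p(\psi_p(A))$ equal $(id_n \otimes glim)\big((\phi_p^{k+1}(A))_k\big)$; shift-invariance then lets us replace $(\phi_p^{k+1}(A))_k$ by $(\phi_p^k(A))_k$ without changing $glim$, so each composition equals $\psi_p(A)$, proving (4). Statement (5) follows immediately: by (4) and induction $\phi_p^k(\psi_p(A)) = \psi_p(A)$ for all $k$, so $\psi_p(\psi_p(A))$ is $(id_n \otimes glim)$ of the constant sequence $\psi_p(A)$, namely $\psi_p(A)$.

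I expect part (1) to be the main obstacle, since it is the only point where the generalized-limit construction must be shown to preserve complete positivity. The slice-map viewpoint above handles this cleanly by reducing to three standard facts: iterates of CP maps are CP, a map into an $\ell^\infty$-sum is CP iff each coordinate is, and positive functionals on commutative C*-algebras are completely positive. A more computational alternative is to examine the Choi matrix $\sum_{v,w} E_{v,w} \otimes \psi_p(E_{v,w})$ directly, exhibiting it as $(id \otimes glim)$ applied to the positive sequence of Choi matrices of the $\phi_p^k$ and invoking positivity-preservation of $glim$; this avoids the tensor-algebra language but needs the same positivity input. Everything else is a routine consequence of linearity, monotonicity, and shift-invariance of the generalized limit together with the three preceding Lemmas.
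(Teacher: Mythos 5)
Your proposal is correct, and parts (2)--(5) run along essentially the same lines as the paper: linearity of $glim$ for (2), the entrywise estimate for (3), shift-invariance for (4), and the constant-sequence property for (5), including the same two-sided computation in (4) where $\phi_p$ is pulled through $id_n\otimes glim$ entrywise. Where you genuinely diverge is part (1). The paper argues at the level of quadratic forms: it writes $\langle \psi_p(A)h,h\rangle = glim\big(\langle \phi_p^k(A)h,h\rangle\big)$, deduces positivity from positivity of $glim$, and then simply asserts that the CP case ``is similar'' --- i.e., it defers the complete positivity to an omitted amplification of the same computation. You instead factor $\psi_p = (id_n\otimes glim)\circ \Phi$ through $M_n\big(\ell^\infty(\bb N)\big)$, using that positivity in an $\ell^\infty$-product is detected coordinatewise, that a positive functional on the commutative C*-algebra $\ell^\infty(\bb N)$ is automatically completely positive, and that CP maps compose. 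This is a clean structural argument that gets complete positivity in one step with no amplification bookkeeping, and it isolates exactly which features of $glim$ are used where: only positivity enters (1), while shift-invariance is reserved for (4) and (5). The paper's route is more elementary and self-contained, avoiding the identification $\ell^\infty(\bb N, M_n)\cong M_n\big(\ell^\infty(\bb N)\big)$ and the automatic-CP fact; your computational alternative via the Choi matrices $(id\otimes glim)$-limit is in fact closer in spirit to what the paper leaves unsaid. One further merit of your write-up: in (3) you supply the inequality $|glim((c_k))|\le glim((|c_k|))$ for bounded complex sequences (via the rotation trick and self-adjointness of positive functionals), a detail the paper glosses over with ``similar to the proof of Lemma~\ref{1-norm}''; you also correctly note that Lemma~\ref{1-norm} is what guarantees the sequences $\big(\phi_p^k(A)\big)_k$ are bounded so that $\Phi$ is well defined.
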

\begin{proof} The first two properties follow from the linearity of the glim functional. For example, if $A= (a_{x,y})$ and $h= (h_1,...,h_n) \in \bb C^n,$ then
 $$ \langle \psi_p(A) h, h \rangle = \sum_{x,y} glim((a_{x,y}(k)))h_y\overline{h_x} = glim \big( \sum_{x,y} a_{x,y}(k)h_y \overline{h_x} \big) $$
 $$ = glim \big(\langle \phi_p^k(A)h,h \rangle \big)$$ 
If $A \ge 0$, then $\phi^k(A) \ge 0$ for all $k,$ and so is the above function of $k$. Since $glim$ is a positive linear functional, we find $A \ge 0$ implies $\langle \psi_p(A)h,h \rangle \ge 0,$ for all $h.$  This shows that $\psi_p$ is a positive map. The proof that it is CP is similar, as is the proof that it preserves $\sigma.$

The proof of the third property is similar to the proof of Lemma~\ref{1-norm}.

For the next claim, we have that 
\[\psi_p(\phi_p(A)) = (id \otimes glim)(\phi_p^{k+1}(A)) = (id \otimes glim)(\phi_p^k(A)) = \psi_p(A).\]
If we set $\psi_p(A) = \sum_{v,w} b_{v,w} E_{v,w},$ with $b_{v,w}= glim ( a_{v,w}(k)),$
then

$$\phi_p(\psi_p(A)) = \sum_{x,y,v,w} p(x,y|v,w) b_{v,w} E_{x,y} $$
$$=\sum_{x,y}  glim \big( \sum_{v,w} p(x,y|v,w) a_{v,w}(k) \big) E_{x,y}
 = \sum_{x,y} glim \big( a_{x,y}(k+1) \big) E_{x,y} = \psi_p(A)$$

Finally, to see the last claim, we have that
\[ \psi_p( \psi_p(A)) = (id \otimes glim)( \phi_p^k(\psi_p(A))) = (id \otimes glim)(\psi_p(A)) = \psi_p(A),\]
since the $glim$ of a constant sequence is equal to the constant.
\end{proof}

\begin{thm} Let $G$ be a graph on $n$ vertices, let $x \in \{ l, qa, qc, vect \}$ and let $ p(x,y|v,w) \in Q^s_x(n,n)$ be a winning $x$-strategy implementing a quantum graph $x$-homomorphism from $G$ to $G$.  Set $p_1(x,y|v,w) = p(x,y|v,w)$ and recursively define
\[ p_{k+1}(x,y|v,w) = \sum_{a,b} p(x,y|a,b) p_k(a,b|v,w).\]
If we set $r(x,y|v,w) = glim \big( p_k(x,y|v,w) \big),$ then $r(x,y|v,w)  \in Q^s_x(n,n)$ is a winning $x$-strategy implementing a graph $x$-homomorphism from $G$ to $G$ such that:
\begin{enumerate}
\item $\psi_p = \phi_r,$
\item $r(x,y|v,w) = \sum_{a,b} r(x,y|a,b) r(a,b|v,w).$
\end{enumerate}
\end{thm}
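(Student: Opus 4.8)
The plan is to reduce every assertion to the already-established properties of the map $\psi_p$ from the preceding proposition, using the dictionary between composition of correlations and composition of CP maps from Corollary~\ref{comp}. The organizing identity is $\phi_p^k = \phi_{p_k}$: the recursion $p_{k+1}(x,y|v,w) = \sum_{a,b} p(x,y|a,b)p_k(a,b|v,w)$ is precisely the composition of the correlation $p$ with $p_k$, so Corollary~\ref{comp} gives $\phi_{p_{k+1}} = \phi_p \circ \phi_{p_k}$, and an induction from $\phi_{p_1} = \phi_p$ yields $\phi_{p_k} = \phi_p^k$. Since $\phi_p^k(E_{v,w}) = \phi_{p_k}(E_{v,w}) = \sum_{x,y} p_k(x,y|v,w)E_{x,y}$, the bounded sequence $a_{x,y}(k)$ appearing in the definition of $\psi_p$ (with $A = E_{v,w}$) is exactly $p_k(x,y|v,w)$, and therefore
$$\psi_p(E_{v,w}) = \sum_{x,y} glim\big(p_k(x,y|v,w)\big)E_{x,y} = \sum_{x,y} r(x,y|v,w)E_{x,y} = \phi_r(E_{v,w}).$$
By linearity this is claim (1), $\psi_p = \phi_r$.

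Next I would show that $r \in Q^s_x(n,n)$ is a winning $x$-strategy. The Proposition preceding Corollary~\ref{comp} shows that the composition of two correlations in $Q_x$ is again in $Q_x$ and preserves synchronicity, so an induction gives $p_k \in Q^s_x(n,n)$ for every $k$; equivalently, by Theorem~\ref{compgraphhomo} each $p_k$ is itself a winning $x$-strategy for a homomorphism $G \to G$. The winning conditions then pass to $r$ for free: for each forbidden tuple (the diagonal $v=w$ with $x \ne y$, and $v \sim_G w$ with $x \nsim_G y$) the sequence $p_k(x,y|v,w)$ is identically zero, and $glim$ of the zero sequence is zero, so $r(x,y|v,v)=0$ for $x\ne y$ and $r(x,y|v,w)=0$ when $v \sim_G w$, $x \nsim_G y$. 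I expect the one genuine obstacle to be membership $r \in Q_x(n,n)$: since $glim$ is only a generalized limit, I must know that $Q_x(n,n)$ is closed and convex so that this set is stable under $glim$. Writing $Q_x(n,n)$ as an intersection of half-spaces $\langle \cdot, u\rangle \le c$, each scalar sequence $\langle p_k, u\rangle$ is bounded by $c$, so positivity of $glim$ gives $\langle r, u\rangle = glim(\langle p_k, u\rangle) \le c$, whence $r \in Q_x(n,n)$. This is exactly where the hypothesis $x \in \{l,qa,qc,vect\}$ is used and why $q$ must be excluded, as $Q_q$ is not known to be closed.

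Finally, claim (2) is the idempotence of $r$ under composition, which I would obtain from property (5) of the preceding proposition. By claim (1), $\phi_r \circ \phi_r = \psi_p \circ \psi_p = \psi_p = \phi_r$. Setting $s(x,y|v,w) = \sum_{a,b} r(x,y|a,b)r(a,b|v,w)$, Corollary~\ref{comp} gives $\phi_s = \phi_r \circ \phi_r = \phi_r$. Because a correlation is recovered from its CP map by $r(x,y|v,w) = \langle \phi_r(E_{v,w})e_y, e_x\rangle$ (the matrix units being linearly independent), the equality $\phi_s = \phi_r$ forces $s = r$ entrywise, which is precisely claim (2).
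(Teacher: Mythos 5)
Your proof is correct and follows essentially the same route as the paper's: establish $\phi_{p_k}=\phi_p^k$ from Corollary~\ref{comp} by induction, identify $\psi_p=\phi_r$ on matrix units, deduce membership of $r$ in $Q^s_x(n,n)$ from stability under $glim$, get the winning conditions from the vanishing of the forbidden entries for every $k$, and obtain (2) from $\phi_r\circ\phi_r=\psi_p\circ\psi_p=\psi_p=\phi_r$ together with the injectivity of $p\mapsto\phi_p$. Your half-space argument for the $glim$-stability step is in fact slightly more careful than the paper's, which asserts that $glim$ preserves membership in an arbitrary closed set, whereas convexity (which $Q^s_x(n,n)$ does possess) is genuinely needed --- e.g., the alternating sequence $0,1,0,1,\dots$ lies in the closed set $\{0,1\}$ but a generalized limit may assign it any value in $[0,1]$.
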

\begin{proof} By Theorem~\ref{compgraphhomo}, $\phi_p^k = \phi_{p_k},$ and $p_k$ is a winning $x$-strategy for a graph $x$-homomorphism from $G$ to $G$.  Thus,
\begin{multline*} \psi_p(E_{v,w}) = (id \otimes glim)(\phi_p^k(E_{v,w})) = (id \otimes glim)( \phi_{p_k}(E_{v,w}))\\ = \sum_{x,y} glim \big( p_k(x,y|v,w) \big) E_{x,y} = \phi_r(E_{v,w}).\end{multline*}
Thus, (1) follows.

Since $\phi_r \circ \phi_r = \psi_p \circ \psi_p = \psi_p = \phi_r,$ $(2)$ follows from Proposition~\ref{comp}.

Finally, if a bounded sequence of matrices $A_k= \big( a_{v,w}(k)\big) \in M_n$ all belong to a closed set, then it is not hard to see that $A= \big( glim ( a_{v,w}(k) ) \big)$ also belongs to the same closed set. Thus, since $ \big( p_k(x,y|v,w) \big)$ is in the closed set $Q^s_x(n,n)$ for all $k,$ we have that $\big( r(x,y|v,w) \big)  \in Q^s_x(n,n).$ Also,  since $p_k$ is a winning $x$-strategy for a graph $x$-homomorphism of $G$, for all $k$, we have that for all $k$,  $\big( p_k(x,y|v,w) \big)$ is zero in certain entries. Since the $glim$ of the 0 sequence is again 0, we will have that $\big( r(x,y|v,w) \big)$ is also 0 in these entries. Hence, $r$ is a winning $x$-strategy for a graph $x$-homomorphism. 
\end{proof} 
\begin{remark} In the case that $p$ is a winning $q$-strategy implementing a graph $q$-homomorphism, all we can say about $r$ is that it is a winning $qa$-strategy implementing a graph $qa$-homomorphism, since we do not know if the set $Q^s_q(n,n)$ is closed.
\end{remark}

There is a natural partial order on idempotent CP maps on $M_n.$  Given two idempotent maps $\phi, \psi: M_n \to M_n$ we set $\psi \le \phi$ if and only if $\psi \circ \phi = \phi \circ \psi = \psi.$

\begin{thm} Let $x \in \{ l, qa, qc, vect \}$, then there exists $ r(x,y|v,w) \in Q^s_x(n,n)$ implementing a quantum $x$-homomorphism, such that
 $\phi_r:M_n \to M_n$  is idempotent and is minimal in the partial order on idempotent maps of the form $\phi_p$ implemented by a quantum $x$-homomorphism of $G$.
\end{thm}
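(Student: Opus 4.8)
The plan is to convert the existence of a minimal element into a finiteness statement about ranks of idempotents. Let $\mathcal{I}$ denote the collection of all idempotent maps $\phi_p : M_n \to M_n$ that are implemented by some winning $x$-strategy $p \in Q^s_x(n,n)$ for a graph $x$-homomorphism from $G$ to $G$; the partial order $\le$ introduced above is defined precisely on $\mathcal{I}$. First I would note that $\mathcal{I}$ is nonempty: the identity map of $G$ is a graph homomorphism, and by Example~\ref{classicisloc} its associated strategy $p$ satisfies $\phi_p = \mathrm{id}_{M_n}$, which is idempotent and hence lies in $\mathcal{I}$. (Alternatively, the theorem immediately preceding the definition of $\le$ produces an element of $\mathcal{I}$ from any winning $x$-strategy.)

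The key step is a purely linear-algebraic observation about idempotents. Suppose $\psi, \phi \in \mathcal{I}$ with $\psi \le \phi$, that is, $\psi \circ \phi = \phi \circ \psi = \psi$. From $\psi = \phi \circ \psi$ one gets $\mathrm{ran}(\psi) \subseteq \mathrm{ran}(\phi)$, and from $\psi = \psi \circ \phi$ one gets $\ker(\phi) \subseteq \ker(\psi)$; in particular $\mathrm{rank}(\psi) \le \mathrm{rank}(\phi)$. Moreover, if $\mathrm{rank}(\psi) = \mathrm{rank}(\phi)$, then both inclusions become equalities, of ranges directly and of kernels via rank–nullity. Since an idempotent linear map on a finite-dimensional space is exactly the projection onto its range along its kernel, it is completely determined by this pair of subspaces, and therefore $\psi = \phi$. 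Thus a strict step down in the order $\le$ forces a strict drop in rank.

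With this in hand I would conclude by a minimization. The rank of any map in $\mathcal{I}$ is an integer in $\{1, \dots, n^2\}$ — it is at least $1$ because the identity $\sigma(\phi_p(A)) = \sigma(A)$ forces $\phi_p \ne 0$ — so the set of ranks of elements of $\mathcal{I}$ is a nonempty set of positive integers and has a least element, attained by some $\phi_r \in \mathcal{I}$ with $r \in Q^s_x(n,n)$ implementing a graph $x$-homomorphism from $G$ to $G$. I claim $\phi_r$ is minimal for $\le$. Indeed, if $\psi \in \mathcal{I}$ satisfies $\psi \le \phi_r$, then $\mathrm{rank}(\psi) \le \mathrm{rank}(\phi_r)$ by the previous paragraph, while minimality of $\mathrm{rank}(\phi_r)$ forces $\mathrm{rank}(\psi) = \mathrm{rank}(\phi_r)$, whence $\psi = \phi_r$. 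This exhibits $\phi_r$ as a minimal idempotent map of the required form.

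The main obstacle is the linear-algebra lemma of the middle paragraph, and specifically the claim that equal ranks force $\psi = \phi$; the rest is bookkeeping, namely translating $\le$ into the inclusions of ranges and kernels and invoking that an idempotent is pinned down by its range and kernel. It is worth emphasizing that the argument uses nothing about the CP or correlation structure beyond nonemptiness of $\mathcal{I}$ and the fact that composition of strategies realizes composition of maps (Corollary~\ref{comp}); the restriction $x \ne q$ enters only through the preceding theorem, which is what guarantees that the idempotents being compared genuinely arise from winning $x$-strategies in the closed sets $Q^s_x(n,n)$.
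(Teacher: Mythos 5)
Your proof is correct, but it takes a genuinely different route from the paper's. The paper proves minimality via Zorn's lemma: given a totally ordered family $\{\phi_{p_t}\}$ of idempotent maps implemented by winning $x$-strategies, it uses the compactness of $Q^s_x(n,n)$ to extract a convergent subnet of the correlations $p_t$, and checks that the limit correlation still implements a quantum $x$-homomorphism, gives an idempotent map, and is a lower bound for the chain; this is precisely where the hypothesis $x \ne q$ enters, since $Q^s_q(n,n)$ is not known to be closed. You replace all of this with finite-dimensional linear algebra: from $\psi\circ\phi = \phi\circ\psi = \psi$ you correctly deduce $\mathrm{ran}(\psi)\subseteq\mathrm{ran}(\phi)$ and $\ker(\phi)\subseteq\ker(\psi)$, and since an idempotent on a finite-dimensional space is determined by its range and kernel (via the decomposition $M_n = \mathrm{ran}\oplus\ker$), equal ranks force $\psi=\phi$; hence an element of minimal rank in the nonempty family $\mathcal{I}$ is minimal for the order. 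Every step checks out, including nonemptiness of $\mathcal{I}$ via the identity homomorphism. What your approach buys: it eliminates Zorn's lemma and all topological input, it shows quantitatively that strictly decreasing chains of such idempotents have length at most $n^2$, and --- notably --- since it never uses closedness of $Q^s_x(n,n)$, it actually proves the theorem for $x=q$ as well, a case the paper excludes; indeed your closing remark slightly undersells this, since your argument does not invoke the preceding theorem at all (the identity strategy already populates $\mathcal{I}$ for every $x$), so the restriction $x\ne q$ is simply unnecessary in your proof. What the paper's approach buys: the subnet argument produces lower bounds for arbitrary chains of correlations, which is the natural route in settings without a rank function (e.g., possible infinite-dimensional analogues of these idempotent CP maps), and it keeps the construction entirely inside the correlation sets, in the same spirit as the generalized-limit construction of the preceding theorem.
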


\begin{proof} Quantum $x$-homomorphisms always exist, since the identity map on $G$ belongs to the $l$-homomorphisms, which is the smallest set. By the last theorem we see that beginning with any correlation $p$ implementing a quantum $x$-homomorphism, there exists a correlation $r$ implementing a quantum $x$-homomorphism with $\phi_r$ idempotent. 

It remains to show the minimality claim. We will invoke Zorn's lemma and show that every totally ordered set of such correlations has a lower bound.  Let $\big\{ p_t(x,y|v,w) : t \in T \big\} \subset Q^s_x(n,n)$ with $T$ a totally ordered set, where all $p_t(x,y|v,w)$ implement a quantum $x$-homomorphisms, with  $\phi_{p_t}$ idempotent, and  $\phi_{p_{t}} \le \phi_{p_s},$ whenever $s \le t.$

These define a net in the compact set $Q^s_x(n,n)$ and so we may choose a convergent subnet.
Now it is easily checked that if we define $p(x,y|v,w) $ to be the limit point of this subnet, then it implements a quantum $x$-homomorphism, 
$\phi_p$ is idempotent, and $\phi_p \le \phi_{p_t}$ for all $t \in T.$
\end{proof}

\begin{remark} It is important to note that we are not claiming that $\phi_r$ can be chosen minimal among all idempotent CP maps, just minimal among all such maps that implement a quantum $x$-homomorphism of $G$.
\end{remark}

\begin{defn} Let $x \in \{ l, qa, qc, vect \}$, then a {\bf quantum $x$-core for $G$} is any $ r(x,y|v,w)  \in Q^s_x(n,n)$ that implements a quantum $x$-homomorphism such that $\phi_r$ is idempotent and minimal among all $\phi_p$ implemented by a quantum  $x$-homomorphism of $G.$
\end{defn}


\section*{Appendix: Background Material}

Let $I$ and $O$ be two finite sets called the {\it input} set and {\it output} set, respectively. 

\begin{defn}
A set of real numbers $p(x,y|v,w), \, v,w \in I, \, x,y \in O$ is called a {\bf local} or {\bf classical correlation} if there is a probability space $(\Omega, \mu)$ and random variables, 
\[f_v,g_w: \Omega \to O \text{ for each } v,w\in I\]
such that
\[p(x,y|v,w) = \mu(\{ \omega\ |\ f_v(\omega)=x, g_w(\omega)=y\})\]
\end{defn}

To motivate this definition, imagine that there are two people, Alice and Bob, when Alice receives input $v$ she uses the random variable $f_v$ and when Bob receives input $w$ he uses the random variable $g_w$.
In this case $p(x,y|v,w)$ represents the probability of getting outcomes $x$ and $y$ respectively, given that they received inputs $v$ and $w$, respectively.

\begin{defn} Given a Hilbert space $\cl H$, a collection $\{ E_x : x \in O \}$ of bounded operators on $\cl H$ is called a {\bf projection valued measure(PVM)} on $\cl H$, provided that each $E_x$ is an orthogonal projection and $\sum_{x \in O} E_x = I_{\cl H}.$  The set is called a {\bf positive operator valued measure(POVM)} on $\cl H$, provided that each $E_x$ is a positive semidefinite operator on $\cl H$ and $\sum_{x \in O} E_x = I_{\cl H}.$
\end{defn} 

\begin{defn}
A density $p$ is called a {\bf quantum correlation} if it arises as follows: 

Suppose Alice and Bob have finite dimensional Hilbert spaces $\cl H_A$, $\cl H_B$ and for each input $v \in I$ Alice has PVMs $\{ F_{v,x} \}_{x \in O}$ on $\cl H_A$ and for each input $w \in I$ Bob has PVMs $\{ G_{w,y} \}_{y \in O}$ on $\cl H_B$  and they share a state $\psi \in {\cl H_A \otimes \cl H_B}$,  then
\[ p(x,y|v,w) = \langle  F_{v,x} \otimes G_{w,y} \psi, \psi \rangle\]
This is the probability of getting outcomes $x,y$ given that they conducted experiments $v,w.$ 
\end{defn}

\begin{defn} \label{qcdef}
A density $p$ is called a {\bf quantum commuting correlation} if there is a single Hilbert space $\cl H$,  such that for each $v \in I$ Alice has PVMs $\{ F_{v,x} \}_{x \in O}$ on $\cl H$ and for each $w \in I$ Bob has PVMs $\{G_{w,y} \}_{y \in O} $ on $\cl H$ satisfying  
$$F_{v,x}G_{w,y} = G_{w,y}F_{v,x}, \, \forall v,w,x,y$$  and
\[p(x,y|v,w) = \langle F_{v,x}G_{w,y} \psi, \psi \rangle \] 
where $\psi \in {\cl H}$ is a shared state.\\
\end{defn}

\begin{remark} \label{two correlations}
Suppose we have projection valued measures $\{P_{v,i}\}_{i=1}^m$ and $\{Q_{w,j}\}_{j=1}^m$ on $\cl{H}$ as in \ref{qcdef}. Set $\cl{X}_{v,i}=P_{v,i}k$, $\cl{Y}_{w,j}=Q_{w,j}k$. Then \\
(1) $\cl{X}_{v,i}\perp \cl{X}_{v,j}$ for every $i\neq j$.\\
(2) $\cl{Y}_{w,i}\perp \cl{Y}_{w,j}$ for every $i\neq j$.\\
(3) $\sum_i \cl{X}_{v,i}=\sum_j\cl{Y}_{w,j}$ for every $v, w$ and $\|\sum_i \cl{X}_{v,i}\|=1.$\\
(4) $\inner{\cl{X}_{v,i}}{\cl{Y}_{w,j}}\geq 0$ since $\inner{\cl{X}_{v,i}}{\cl{Y}_{w,j}}=\inner{P^2_{v,i}}{Q^2_{w,j}}=\inner{Q_{w,j}P_{v,i}k}{Q_{w,j}P_{v,i}k}= \|Q_{w,j}P_{v,i}k\|^2\geq0$ where the second equality results from the fact that $Q_{w,j}$ and $P_{v,i}$ are commuting projections.
\end{remark}

\begin{defn}
A density $p$ is called a {\bf vectorial correlation} if $p(i,j|v,w)=\inner{\cl{X}_{v,i}}{\cl{Y}_{w,j}}$ for sets of vectors $\{\cl{X}_{v,i}\},\{\cl{Y}_{w,j}\}$ satisfying (1) through (4) in \ref{two correlations}.
\end{defn}

Letting $n:=|I|$ and $m:=|O|$, we let: 
\begin{itemize}
\item $Q_{loc}(n,m)$ denote the set of all densities that are local correlations.
\item $Q_q(n,m)$ denote the set of all densities that are quantum correlations.
\item Set $Q_{qa}(n,m):= \overline{Q_q(n,m)}$, the closure of $Q_q(n,m)$.
\item $Q_{qc}(n,m)$ denote the set of all densities that are quantum commuting correlations.
\item $Q_{vect}(n,m)$ denote the set of all densities that are vectorial correlations.
\item For $x\in \{ loc, q, qa, qc\}$, we let $Q^s_{x}(n,m)$ denote the set of synchronous correlations in $Q_{x}(n,m)$.
\end{itemize}

\begin{remark}
Results in \cite{PT} and \cite{PSSTW} show that the possibly larger sets that one obtains by using the larger collection of all POVMs in the definitions of $Q_q$, $Q_{qa}$ and $Q_{qc}$ in place of PVMs, yield the same sets. These equalities essentially follow from Stinespring's theorem.  Also, while earlier versions of \cite{PSSTW} use the notation $Q_t(n,m)$, which we have adopted here, this notation was changed to $C_t(n,m)$ in later versions. 
\end{remark}

\begin{remark} In addition to $Q_{vect}(n,m)$ being a natural relaxation of the other sets, determining membership in this set reduces to standard problems in linear algebra.
Another important reason for studying $Q_{vect}(n,m)$ is Tsirelson's 1980 \cite{tsirelson} attempted proof that $Q_q(n,m)= Q_{qc}(n,m)$.  He attempted to show that $Q_q(n,m) = Q_{vect}(n,m)$, from which the other equality would follow, by starting with vectors satisfying (1) through (4) and attempting to build projections $\{P_{v,i}\},\{Q_{w,j}\}$ on finite dimensional Hilbert space, and a vector $k$ such that $\cl{X}_{v,i}=P_{v,i}k$ and $\cl{Y}_{w,j}=Q_{w,j}k$ commuted. In \cite{CMRSSW} a graph on 15 vertices is constructed for which
$\chi_q(G)=8 \ne \chi_{vect}(G)=7$, giving a definitive proof that $Q_{q}(15,7) \ne Q_{vect}(15,7)$, hence showing that for some such set of vectors, one cannot construct corresponding projections. Later, for this same graph \cite{PSSTW} proved that $\chi_{qc}(G) =8 \ne \chi_{vect}(G)$ showing that $Q_{qc}(15,7) \ne Q_{vect}(15,7)$.\\
\end{remark}

Here are some further facts and open problems about these sets that show their importance. 
\begin{itemize}
\item $Q_{loc}(n,m) \subseteq Q_q(n,m) \subseteq Q_{qa}(n,m)  \subseteq Q_{qc}(n,m) \subseteq Q_{vect}(n,m)$. 
\item $Q_{loc}(n,m)$, $Q_{qa}(n,m)$, $Q_{qc}(n,m)$, and $Q_{vect}(n,m)$ are closed.
\item Bounded entanglement conjecture:  $Q_q(n,m)=Q_{qa}(n,m)$  $\forall n,m$, i.e., is $Q_q(n,m)$ closed. 
\item Tsirelson conjecture \cite{tsirelson}:  $Q_{q}(n,m) = Q_{qc}(n,m)\ \forall n,m$.  
\item Ozawa \cite{ozawa} proved that Connes' embedding conjecture \cite{connes} is true if and only if $Q_{qa}(n,m) = Q_{qc}(n,m), \, \forall n,m$. 
\item Paulsen and Dykema \cite{DP} proved that Connes' embedding conjecture is true if and only if $\overline{Q^s_{q}(n,m)} = Q_{qc}^s(n,m), \, \forall n,m$.
\item The synchronous approximation conjecture:  $\overline{Q^s_q(n,m)} = Q^s_{qa}(n,m)$ $\forall n,m$.
\item If Tsirelson's conjecture is true, then the Connes' embedding conjecture and the bounded entanglement conjecture are true.
\item If  Connes' embedding conjecture is true, then the synchronous approximation conjecture is true.
\end{itemize}

\section*{Acknowledgements}

The authors wish to thank S. Severini and D. Stahlke for several
valuable comments that led to improvements in the paper. This research was supported in part by NSF grant DMS-1101231.


\begin{thebibliography}{99}
\bibitem{CMRSSW} T. Cubitt, L. Mancinska, D. Roberson, S. Severini, D. Stahlke, and A. Winter, {\it Bounds on entanglement assisted source-channel coding via the Lovasz theta number and its variants}, arXiv:1310.7120 [quant-ph], Oct. 2013.

\bibitem{Da} D.P. Dailey, {\it Uniqueness of colorability and colorability of planar 4-regular graphs are NP-complete}, Discrete Mathematics 30(3)(1980): 289-293, doi:10.1016/0012-365X(80)90236-8




\bibitem{dsw}
R. Duan, S. Severini, and A. Winter, {\it Zero-error communication via quantum channels, noncommutative graphs, and a quantum Lov\'{a}sz number}, IEEE Transactions on Information Theory, vol. 59, no. 2, pp. 1164-1174, 2013.

\bibitem{conway} J. B. Conway, {\it A Course in Functional Analysis}, 2nd edition. New York: Springer, 1994.
  
\bibitem{connes} A. Connes, {\it Classification of injective factors}, Ann. of Math. 104, 73-115, 1976.



\bibitem{DP} K. Dykema and V.I. Paulsen, {\it Synchronous correlation
    matrices and Connes' embedding conjecture,} preprint  arXiv:1503.07207


\bibitem{homo} G. Hahn and C. Tardif, {\it Graph homomorphisms: structure and symmetry}, in Graph symmetry. Springer, 1997, pp. 107-166.

\bibitem{Ji} Z. Ji, {\it Binary Constraint System Games and Locally
    Commutative Reductions,} arxiv:1310.3794

\bibitem{tsirelson} O. Krueger and R. F. Werner, {\it Some open problems in Quantum Information Theory}, arXiv:quant-ph/0504166, Apr. 2005.


\bibitem{Lo} L. Lov\'{a}sz, {\it On the Shannon capacity of a graph}, IEEE Transactions on Information Theory, vol. 25, no. 1, pp. 1-7, Jan. 1979.

\bibitem{MR} D. E. Roberson and L. Mancinska, {\it Graph Homomorphisms for Quantum Players}, arXiv:1212.1724 [quant-ph], Dec. 2012.






\bibitem{OP} C. M. Ortiz and V. I. Paulsen, {\it Lov’asz theta type norms and Operator Systems}, arXiv:1412.7101 [math], Dec. 2014.

\bibitem{ozawa} N. Ozawa, {\it About the Connes Embedding Conjecture---Algebraic approaches---}, arXiv:1212.1700 [math], Dec. 2012.

\bibitem{vpbook} V.~I. Paulsen, Completely Bounded Maps and Operator
  Algebras, {\it Cambridge Studies in Advanced Mathematics} {\bf 78},
  Cambridge University Press, 2002.
  
\bibitem{PSSTW} V. I. Paulsen, S. Severini, D. Stahlke, I. G. Todorov, and A. Winter, {\it Estimating quantum chromatic numbers}, arXiv:1407.6918 [quant-ph], Jul. 2014.

\bibitem{PT} V. I. Paulsen and I. G. Todorov, {\it Quantum chromatic numbers via operator systems}, arXiv:1311.6850 [math], Nov. 2013.

\bibitem{Ro}  D. E. Roberson, {\it Variations on a Theme: Graph Homomorphisms}, Ph.D. dissertation, Aug. 2013.

\bibitem{St} D. Stahlke, {\it Quantum source-channel coding and non-commutative graph theory}, arXiv:1405.5254, [quant-ph], May 2014.

\end{thebibliography}
\end{document}